\documentclass[10pt,A4]{article}
\usepackage[latin5]{inputenc}
\usepackage{amsmath,amssymb}
\usepackage{amsthm}
\usepackage{indentfirst}

\newtheorem{definition}{Definition}
\newtheorem{theorem}{Theorem}

\newtheorem{proposition}{Proposition}

\numberwithin{definition}{section} \numberwithin{theorem}{section}
\numberwithin{lemma}{section}\numberwithin{corollary}{section}
\numberwithin{equation}{section} \numberwithin{example}{section}
\numberwithin{proposition}{section} \numberwithin{remark}{section}
\oddsidemargin  0.8cm \evensidemargin 1.0cm \topmargin 0.5cm
\headsep 0in \headheight 0in \textheight 22cm \textwidth 15.5cm

\begin{document}

\begin{center}

{\bf \Large Approximation of the Set of Integrable Trajectories of the Control Systems with Limited Control Resources}

\vspace{3mm}

Nesir Huseyin$^1$, Anar Huseyin$^2$, Khalik G. Guseinov$^3$

\vspace{3mm}

$^1$Cumhuriyet University, Faculty of Education, Department of Mathematics and Science Education \\ 58140 Sivas, TURKEY

e-mail: nhuseyin@cumhuriyet.edu.tr

\vspace{2mm}

{\small $^2$Cumhuriyet University, Faculty of Science, Department of Statistics and Computer Sciences \\ 58140 Sivas, TURKEY

e-mail: ahuseyin@cumhuriyet.edu.tr

\vspace{2mm}

$^3$Eskisehir Technical  University, Faculty of Science, Department of Mathematics \\ 26470 Eskisehir, TURKEY

e-mail: kguseynov@eskisehir.edu.tr}

\end{center}

\textbf{Abstract.} In this paper an approximation of the set of multivariable and $L_2$ integrable trajectories of the control system described by Urysohn type integral equation is considered. It is assumed that the system is affine with respect to the control vector. The admissible control functions are chosen from the closed ball of the space $L_2$, centered at the origin with radius $\rho$. Step by step way, the set of admissible control functions is replaced by the set of controls, consisting of a finite number of piecewise-constant control functions. It is proved that under appropriate choosing of the discretization parameters, the set of trajectories generated by a finite number of piecewise-constant control functions is an internal approximation of the set of trajectories.

\vspace{5mm}

\textbf{Keywords.} Urysohn integral equation, integrable trajectory, control system, integral constraint, approximation

\vspace{5mm}

\textbf{2010 Mathematics Subject Classification.}  93C23, 93C35

\section{Introduction}

One of the important notions of the theory of control systems are the attainable set and the integral funnel concepts of a given system. In the case when the trajectories of the system are continuous functions, the attainable set and the integral funnel of the control system have rather simple geometric interpretations. The attainable set is defined in the space of states  and consists of points to which the trajectories of the system arrive at the given instant of time. The integral funnel is considered as a generalization of the integral curve notion from the theory of ordinary differential equations and is defined in the space of positions which consists of the graphs of all possible trajectories of the system. Various topological properties and methods of approximate construction of the attainable sets and the integral funnel have been studied in a vast number of papers (see, e.g. \cite{buz} -- \cite{pat} and references therein). It should also be noted the studies that were carried out in the framework of the theory of differential inclusions (see, e.g. \cite{aub} -- \cite{pan}). The construction of the attainable sets and integral funnel of the given control system, allows to construct the trajectories with different prescribed properties (see, e.g. \cite{ers}).

When the trajectories of the system are integrable functions, then in this case the concepts of the attainable set and integral funnel lose their original geometric interpretations. In this case, an important tool for investigation of the control system is the notion of the set of trajectories, which consists of integrable trajectories generated by all possible admissible control functions.

Depending on the character of the control functions, the control systems are classified as the systems with geometric constraints on the control functions; the systems with integral constraints on the control functions; and the systems with mixed, i.e. with geometric and integral constraints on the control functions.  An integral constraint on the control functions is inevitable if a control resource is exhausted by consumption, such as energy, fuel, finance, food, etc. (see, e.g. \cite{con} -- \cite{sub}). Investigation of the attainable sets and integral funnels of the control systems described by ordinary differential equations with integral constraints on the control functions are discussed in papers  \cite{gus3} -- \cite{rou} (see also references therein).

In description of mathematical models of the control systems, various mathematical constructions are used, for example, differential and integral equations, linear and nonlinear operators, etc. Note that integral models in some cases have certain advantages over differential ones, since integral models admit as trajectories not only differentiable functions, but also continuous and even integrable functions (see, e.g.  \cite{ban} -- \cite{krasn}). Let us underline that the solutions concepts of the initial and boundary value problems for different type of differential equations, can be expressed via solutions of the appropriate integral equations.

Existence of the optimal trajectories, controllability of the system, necessary and sufficient conditions for optimality of given processes, approximation of the set of trajectories of the control systems described by integral equations are considered in  \cite{ang} -- \cite{hus7} (see also references therein). In  \cite{hus2} -- \cite{hus7} the various properties and approximation of the set of trajectories and integral funnel of the control systems described by Urysohn type integral equations and integral constraints on the control functions are considered. Note that in papers  \cite{hus2} and \cite{hus3}, where approximations of the set of trajectories and of the integral funnel are studied, only the continuous functions are chosen as the trajectories of the system which satisfy the system's equation everywhere.

In this paper, the control system described by the Urysohn type integral equation is investigated. It is assumed that the system is nonlinear with respect to the state vector, and affine with respect to the control vector. The control functions have integral constraint, more precisely, the closed ball of the space $L_2$, centered at the origin with radius $\rho$, is chosen as the set of admissible control functions. The trajectory of the system is defined as multivariable function also from the space $L_2$ satisfying the system's equation almost everywhere. Step by step way, the set of admissible control functions is replaced by a set which consists of a finite number of piecewise-constant control functions and generates a finite number of trajectories. It is proved that in the appropriate settings of discretization parameters, the set consisting of a finite number of trajectories is an internal approximation of the set of trajectories of the control system.

The work is organized as follows. In Section 2,  the basic conditions which satisfies the system's equation are given. In this section, some auxiliary propositions are also presented which are  used in following arguments. In  Section 3, the main result of the paper is formulated and proved, where the set of integrable trajectories of the system is approximated by a set which consists of a finite number of trajectories (Theorem \ref{teo3.1}).

\section{The System's Description}

Consider control system described by the  Ury\-sohn type integral equation
\begin{equation} \label{ue1}
\displaystyle x(\xi)=f\left(\xi,x\left(\xi\right)\right)+\lambda
\int_{E} \left[K_1\left(\xi,s,x\left(s\right)\right)+K_2\left(\xi,s,x\left(s\right)
\right) u\left(s\right)\right] ds
\end{equation}
 where  $x\in \mathbb{R}^n$ is the state vector, $u\in
\mathbb{R}^m$ is the control vector, $\xi \in E,$ $\lambda \in \mathbb{R}^1,$ $E \subset \mathbb{R}^k$ is a compact set. Without loss of generality it will be assumed that $\lambda >0.$

Let $\rho >0$ be a given number,
\begin{equation*}
V_{\rho}=\left\{u(\cdot) \in L_2(E;\mathbb{R}^m):
\left\| u(\cdot) \right\|_2 \leq \rho \right\}
\end{equation*}
where $L_2\left(E;\mathbb{R}^m\right)$ is the space of Lebesgue measurable functions $u(\cdot):E\rightarrow \mathbb{R}^m$ such that
$\left\|u(\cdot)\right\|_2 <+\infty,$ $\displaystyle \left\|u(\cdot)\right\|_2 =\left(\int_{E} \left\| u(s)\right\|^2 ds\right)^{\frac{1}{2}},$
$\left\| \cdot \right\|$ denotes the Euclidean norm.

$V_{\rho}$ is called the set of admissible control functions and every  $u(\cdot) \in V_{\rho}$ is said to be an admissible control function.

It is assumed that the functions and a number $\lambda >0$ given in system (\ref{ue1}) satisfy the following
conditions:

\vspace{1mm}

\textbf{2.A.} The function  $f(\cdot,x ):E\rightarrow
\mathbb{R}^{n}$ is Lebesgue measurable for every fixed $x\in \mathbb{R}^n$, $f(\cdot,0) \in L_2\left(E;\mathbb{R}^n\right)$ and there exists $\gamma_0(\cdot) \in L_{\infty}\left(E;\mathbb{R}^1 \right)$ such that for almost all (a.a.) $\xi \in E$ the inequality
\begin{equation*}
\left\| f(\xi,x_{1})-f(\xi,x_{2})\right\| \leq \gamma_{0}(\xi) \left\| x_{1}-x_{2}\right\|
\end{equation*} is satisfied for every $x_1\in \mathbb{R}^n$ and $x_2\in \mathbb{R}^n$, where $L_{\infty}\big(E;\mathbb{R}^{n_*}\big)$ is the space of Lebesgue measurable functions $w(\cdot):E\rightarrow \mathbb{R}^{n_*}$ such that
$\left\|w(\cdot)\right\|_{\infty} <+\infty,$ $\displaystyle \left\|w(\cdot)\right\|_{\infty} =\inf \left\{ c>0:  \left\| w(s)\right\| \leq c \ \mbox{for a.a.} \ s \in E\right\};$

\vspace{1mm}

\textbf{2.B.} The function  $K_1(\cdot,\cdot,x ):E \times E \rightarrow
\mathbb{R}^{n}$ is Lebesgue measurable for every fixed $x\in \mathbb{R}^n$, $K_1(\cdot,\cdot,0) \in L_2\left(E\times E;\mathbb{R}^n\right)$ and there exists $\gamma_1(\cdot,\cdot) \in L_{2}\left(E \times E;\mathbb{R}^1 \right)$ such that for a.a. $(\xi,s) \in E \times E$ the inequality
\begin{equation*}
\left\| K_1(\xi,s, x_{1})-K_1(\xi,s,x_{2})\right\| \leq \gamma_{1}(\xi,s) \left\| x_{1}-x_{2}\right\|
\end{equation*} is satisfied for every $x_1\in \mathbb{R}^n$ and $x_2\in \mathbb{R}^n$;

\vspace{1mm}

\textbf{2.C.} The function  $K_2(\cdot,\cdot,x ):E \times E \rightarrow
\mathbb{R}^{n\times m}$ is Lebesgue measurable for every fixed $x\in \mathbb{R}^n$, $K_2(\cdot,\cdot,0) \in L_{2}\left(E\times E;\mathbb{R}^{n\times m}\right)$ and there exists $\gamma_2(\cdot,\cdot) \in L_{\infty} (E \times E;\mathbb{R}^{1})$ such that for a.a. $(\xi,s) \in E \times E$ the inequality
\begin{equation*}
\left\| K_2(\xi,s, x_{1})-K_2(\xi,s,x_{2})\right\| \leq \gamma_{2}(\xi,s) \left\| x_{1}-x_{2}\right\|
\end{equation*} is satisfied for every $x_1\in \mathbb{R}^n$ and $x_2\in \mathbb{R}^n$;

\vspace{1mm}

\textbf{2.D.} The inequality
\begin{equation*}
6\left[ \kappa_0^2 +\lambda^2 \kappa_1^2 +\lambda^2 \rho^2 \kappa_2^2 \mu(E)\right]  < 1
\end{equation*}
is satisfied, where $\kappa_0= \left\| \gamma_0(\cdot) \right\|_{\infty}$, $\displaystyle \kappa_1=\left\| \gamma_1(\cdot, \cdot) \right\|_2=\left( \int_{E}\int_{E}\gamma_1(\xi,s)^2 ds \, d\xi \right)^{\frac{1}{2}},$ $\kappa_2 =\left\|\gamma_2(\cdot,\cdot)\right\|_{\infty},$  $\mu(E)$ denotes the Lebesgue measure of the set $E$.

\vspace{1mm}

Since $K_2(\cdot,\cdot,0) \in L_{2}\left(E\times E;\mathbb{R}^{n\times m}\right),$ then according to \cite{kan}, p.318, for $\varepsilon>0$ there exists a continuous function $g_{\varepsilon}(\cdot,\cdot): E \times E \rightarrow \mathbb{R}^{n\times m}$ such that the inequality
\begin{equation}\label{kacon}
\int_{\Omega} \int_{\Omega}\left\| K_2(\xi,s, 0)-g_{\varepsilon}(\xi,s)\right\|^2 ds \, d\xi \leq \varepsilon^{2}
\end{equation}  is satisfied. Denote
\begin{equation}\label{emep}
M(\varepsilon)=\max\left\{ \left\|g_{\varepsilon}(\xi,s)\right\|: (\xi,s) \in E \times E \right\},
\end{equation}
\begin{equation}\label{om*}
\omega_* = \left(\int_E \int_E \left\|K_2(\xi,s,0)\right\|^2 ds \, d\xi\right)^{\frac{1}{2}}.
\end{equation}

Let us define the trajectory of the system (\ref{ue1}) generated by an admissible control function $u(\cdot)\in
V_{\rho}$. A function $x(\cdot) \in L_2\left(E;\mathbb{R}^n\right)$ satisfying the integral equation (\ref{ue1})
for a.a. $\xi\in E$ is said to be a trajectory of the system (\ref{ue1}) generated by the admissible control function
$u(\cdot)\in V_{\rho} \ .$ The set of trajectories of the system (\ref{ue1}) generated by all admissible control functions $u(\cdot)\in V_{\rho}$ is denoted by $\mathbb{Z}_{\rho}$ and is called briefly the set of trajectories of the system (\ref{ue1}).

The following propositions characterize the set of trajectories and will be used in ensuing arguments.
\begin{proposition} \label{prop2.1} \cite{hus7}  Every admissible control function $u(\cdot)\in
V_{\rho}$ generates uni\-que trajectory of the system (\ref{ue1}).
\end{proposition}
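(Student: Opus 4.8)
The plan is to realize the trajectory as the unique fixed point of the integral operator associated with the right-hand side of (\ref{ue1}) and to invoke the Banach contraction principle on the complete metric space $L_2(E;\mathbb{R}^n)$. First I would fix $u(\cdot)\in V_{\rho}$ and define the operator $T_u\colon L_2(E;\mathbb{R}^n)\to L_2(E;\mathbb{R}^n)$ by
\begin{equation*}
(T_u x)(\xi)=f(\xi,x(\xi))+\lambda\int_{E}\left[K_1(\xi,s,x(s))+K_2(\xi,s,x(s))u(s)\right]ds .
\end{equation*}
Before estimating, one has to check that $T_u$ is well defined, i.e.\ that it maps $L_2$ into itself. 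The measurability of $\xi\mapsto (T_u x)(\xi)$ follows from the measurability hypotheses in 2.A--2.C, and splitting each integrand into its value at $0$ plus a Lipschitz remainder (so $f(\xi,x(\xi))=f(\xi,0)+[f(\xi,x(\xi))-f(\xi,0)]$, and likewise for $K_1$ and $K_2$) together with $f(\cdot,0)\in L_2$, $K_1(\cdot,\cdot,0)\in L_2$, $K_2(\cdot,\cdot,0)\in L_2$ and $\|u(\cdot)\|_2\le\rho$ yields a finite bound for $\|T_u x\|_2$.

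The core of the argument is the contraction estimate. For $x_1(\cdot),x_2(\cdot)\in L_2$ I would write $(T_u x_1)(\xi)-(T_u x_2)(\xi)$ as the sum of three differences (the $f$-part, the $K_1$-part, and the $K_2u$-part) and estimate the $L_2$-norm of each after applying the pointwise inequality $\|a+b+c\|^2\le 3(\|a\|^2+\|b\|^2+\|c\|^2)$. The $f$-term is controlled directly by the Lipschitz condition 2.A, giving the contribution $\kappa_0\|x_1-x_2\|_2$. The $K_1$-term is handled by 2.B followed by the Cauchy--Schwarz inequality in the variable $s$ and then Fubini's theorem in $(\xi,s)$, producing the contribution $\lambda\kappa_1\|x_1-x_2\|_2$.

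The step I expect to be the main obstacle is the $K_2u$-term, because the control $u$ lies only in $L_2$ and multiplies the kernel difference, so there is no uniform pointwise bound on $u$ to exploit. Here I would use 2.C to replace $\gamma_2(\xi,s)$ by the uniform bound $\kappa_2$, and then apply Cauchy--Schwarz in $s$ to the product $\|x_1(s)-x_2(s)\|\,\|u(s)\|$, obtaining the bound $\lambda\kappa_2\rho\|x_1-x_2\|_2$, which is uniform in $\xi$; integrating this constant-in-$\xi$ bound over $E$ is precisely what introduces the measure factor $\mu(E)$. Collecting the three estimates gives
\begin{equation*}
\|T_u x_1-T_u x_2\|_2^2\le 3\left[\kappa_0^2+\lambda^2\kappa_1^2+\lambda^2\rho^2\kappa_2^2\mu(E)\right]\|x_1-x_2\|_2^2 ,
\end{equation*}
and condition 2.D, which asserts $6[\,\cdots\,]<1$ and hence a fortiori $3[\,\cdots\,]<1$, makes $T_u$ a contraction of $L_2(E;\mathbb{R}^n)$. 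The Banach fixed point theorem then furnishes a unique $x(\cdot)\in L_2(E;\mathbb{R}^n)$ with $T_u x=x$, and this function satisfies (\ref{ue1}) for a.a.\ $\xi\in E$, so it is exactly the unique trajectory generated by $u(\cdot)$.
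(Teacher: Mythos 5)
Your proof is correct and is essentially the intended argument: the paper states Proposition \ref{prop2.1} without proof, citing \cite{hus7}, and the natural proof behind that citation is exactly your Banach fixed-point scheme on $L_2(E;\mathbb{R}^n)$ --- the three-term splitting with $\|a+b+c\|^2\le 3(\|a\|^2+\|b\|^2+\|c\|^2)$, Cauchy--Schwarz on the $K_1$- and $K_2u$-terms, and the contraction constant $3\left[\kappa_0^2+\lambda^2\kappa_1^2+\lambda^2\rho^2\kappa_2^2\mu(E)\right]<1$, which condition 2.D (with its factor $6$) guarantees with room to spare. No gaps.
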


\begin{proposition} \label{prop2.2} \cite{hus7}
The set of trajectories $\mathbb{Z}_{\rho}$ of the system (\ref{ue1}) is a bounded subset of the space $L_2\left(E; \mathbb{R}^n\right)$, i.e. there exists $\beta_*>0$ such that $\left\|x(\cdot)\right\|_2 \leq \beta_*$ for every $x(\cdot)\in \mathbb{Z}_{\rho}.$
\end{proposition}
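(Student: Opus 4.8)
The plan is to fix an arbitrary trajectory $x(\cdot) \in \mathbb{Z}_{\rho}$, generated by some admissible control $u(\cdot) \in V_{\rho}$ with $\|u(\cdot)\|_2 \leq \rho$, and to produce a bound on $\|x(\cdot)\|_2$ depending only on the data of the system. Note that $\|x(\cdot)\|_2$ is a priori finite because a trajectory is by definition an element of $L_2(E;\mathbb{R}^n)$; the content of the proposition is that this finite quantity cannot exceed a fixed constant $\beta_*$, uniformly over all admissible controls.

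First I would pass from the equation (\ref{ue1}) to a pointwise estimate of $\|x(\xi)\|$. Using the triangle inequality together with the Lipschitz hypotheses from conditions 2.A, 2.B, 2.C written in the form $\|f(\xi,x)\| \leq \|f(\xi,0)\| + \gamma_0(\xi)\|x\|$ and similarly for $K_1$ and $K_2$, the right-hand side decomposes into exactly six nonnegative terms: the three ``free'' terms built from $f(\xi,0)$, $K_1(\xi,s,0)$, $K_2(\xi,s,0)$, and the three ``Lipschitz'' terms built from $\gamma_0(\xi)\|x(\xi)\|$, $\gamma_1(\xi,s)\|x(s)\|$, and $\gamma_2(\xi,s)\|x(s)\|\,\|u(s)\|$. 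The factor $6$ appearing in condition 2.D is precisely the signal to square this sum via the elementary inequality $\left(\sum_{i=1}^{6}a_i\right)^2 \leq 6\sum_{i=1}^{6}a_i^2$ and then integrate over $\xi \in E$.

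Next I would estimate the $L_2$-contribution of each of the six pieces by Cauchy--Schwarz. The three Lipschitz terms, using $\gamma_0(\xi) \leq \kappa_0$ and $\gamma_2(\xi,s) \leq \kappa_2$ in the $L_\infty$-sense, the $L_2$-bound $\kappa_1$ for $\gamma_1$, and $\|u(\cdot)\|_2 \leq \rho$, together yield a contribution bounded by $6\left[\kappa_0^2 + \lambda^2\kappa_1^2 + \lambda^2\rho^2\kappa_2^2\mu(E)\right]\|x(\cdot)\|_2^2$, which reproduces exactly the coefficient in 2.D multiplying $\|x(\cdot)\|_2^2$. The three free terms, again via Cauchy--Schwarz and the integrability assumptions $f(\cdot,0)\in L_2$, $K_1(\cdot,\cdot,0)\in L_2$, $K_2(\cdot,\cdot,0)\in L_2$ (the last giving the finite quantity $\omega_*$ from (\ref{om*})), produce a finite constant $C$ independent of the particular $x(\cdot)$ and $u(\cdot)$. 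Collecting these gives an inequality of the form $\|x(\cdot)\|_2^2 \leq C + q\,\|x(\cdot)\|_2^2$ with $q = 6\left[\kappa_0^2 + \lambda^2\kappa_1^2 + \lambda^2\rho^2\kappa_2^2\mu(E)\right]$.

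Finally, condition 2.D asserts exactly $q < 1$, so the term $q\,\|x(\cdot)\|_2^2$ can be absorbed into the left-hand side, yielding $\|x(\cdot)\|_2^2 \leq C/(1-q)$ and hence the desired uniform bound $\beta_* = \sqrt{C/(1-q)}$. I expect the main obstacle to be purely organizational: arranging the right-hand side into precisely six terms so that the factor $6$ and the three summands of the coefficient in 2.D line up with the squared estimates, after which each individual bound is a routine Cauchy--Schwarz computation. A small but essential point is that the absorption step is legitimate only because $\|x(\cdot)\|_2$ is finite to begin with, which is guaranteed by the definition of a trajectory as an $L_2$ function.
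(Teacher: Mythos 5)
Your proposal is correct. Note first that the paper itself gives no proof of Proposition \ref{prop2.2}; it is imported from \cite{hus7} by citation, so there is no in-paper argument to compare with line by line. Your reconstruction is, however, clearly the argument that condition 2.D is calibrated for: the six-term splitting (three free terms from $f(\xi,0)$, $K_1(\xi,s,0)$, $K_2(\xi,s,0)u(s)$ and three Lipschitz terms), the inequality $\left(\sum_{i=1}^{6}a_i\right)^2\le 6\sum_{i=1}^{6}a_i^2$, and Cauchy--Schwarz estimates that reproduce the summands $\kappa_0^2$, $\lambda^2\kappa_1^2$, $\lambda^2\rho^2\kappa_2^2\mu(E)$ exactly, followed by absorption of $q\left\|x(\cdot)\right\|_2^2$ with $q<1$; you also correctly flag the one subtle point, namely that absorption is legitimate only because $\left\|x(\cdot)\right\|_2<+\infty$ a priori, which the definition of a trajectory guarantees. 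For comparison, the route consistent with the paper's own toolkit (and used repeatedly in Section 3 for differences of trajectories) would be: move $\kappa_0\left\|x(\xi)\right\|$ to the left-hand side, divide by $1-\kappa_0$, and apply the $L_2$ Gronwall-type Proposition \ref{prop2.4} with $\psi(\xi,s)=\frac{\lambda}{1-\kappa_0}\left[\gamma_1(\xi,s)+\kappa_2\left\|u(s)\right\|\right]$ (Proposition \ref{prop2.41} gives $\left\|\psi(\cdot,\cdot)\right\|_2<1/\sqrt{2}$ under 2.D) and with $h$ assembled from the three free terms and bounded via an analogue of Proposition \ref{prop2.42}. That route needs only the weaker consequence $4\lambda^2\left[\kappa_1^2+\rho^2\kappa_2^2\mu(E)\right]<(1-\kappa_0)^2$ of condition 2.D and produces a $\beta_*$ of the same shape as the constants $c_*$, $g_1$, $g_2$; your version is more elementary and uses 2.D in its exact stated form. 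Nothing is missing from your argument.
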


\begin{theorem} \label{teo2.2} \cite{hus7} The set of trajectories $\mathbb{Z}_{\rho}$ of the system (\ref{ue1}) is a compact subset of the space $L_2\left(E; \mathbb{R}^n\right)$.
\end{theorem}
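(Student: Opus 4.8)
The plan is to prove that $\mathbb{Z}_{\rho}$ is sequentially compact in $L_2\left(E;\mathbb{R}^n\right)$, which suffices since $L_2$ is a metric space. I would take an arbitrary sequence $\{x_n(\cdot)\}\subset\mathbb{Z}_{\rho}$ and let $u_n(\cdot)\in V_{\rho}$ be admissible controls generating them, which exist and are well defined by Proposition \ref{prop2.1}. Since $V_{\rho}$ is a bounded, closed, convex subset of the Hilbert space $L_2\left(E;\mathbb{R}^m\right)$, it is weakly sequentially compact and weakly closed; hence I would pass to a subsequence (not relabeled) with $u_n(\cdot)\rightharpoonup u_*(\cdot)$ weakly, where $u_*(\cdot)\in V_{\rho}$. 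Let $x_*(\cdot)\in\mathbb{Z}_{\rho}$ be the unique trajectory generated by $u_*(\cdot)$. The goal is then to establish the strong convergence $x_n(\cdot)\to x_*(\cdot)$ in $L_2$, since this shows that every sequence in $\mathbb{Z}_{\rho}$ has a subsequence converging to a point of $\mathbb{Z}_{\rho}$, which is exactly the assertion.

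Next I would subtract the two integral equations \eqref{ue1} and split the control term as
\[
K_2(\xi,s,x_n(s))u_n(s)-K_2(\xi,s,x_*(s))u_*(s)=\big[K_2(\xi,s,x_n(s))-K_2(\xi,s,x_*(s))\big]u_n(s)+K_2(\xi,s,x_*(s))\big[u_n(s)-u_*(s)\big].
\]
Using the Lipschitz conditions 2.A, 2.B, 2.C together with the Cauchy--Schwarz inequality and $\left\|u_n(\cdot)\right\|_2\le\rho$, the contributions of $f$, of $K_1$, and of the first (Lipschitz) part of the $K_2$ term are each bounded in $L_2$-norm by a multiple of $\left\|x_n(\cdot)-x_*(\cdot)\right\|_2$, with total coefficient $q=\kappa_0+\lambda\kappa_1+\lambda\kappa_2\rho\sqrt{\mu(E)}$. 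By the elementary inequality $(a+b+c)^2\le 3(a^2+b^2+c^2)$ and condition 2.D one gets $q^2\le 3\big[\kappa_0^2+\lambda^2\kappa_1^2+\lambda^2\rho^2\kappa_2^2\mu(E)\big]<\tfrac12$, so $q<1$. Writing $h_n(\cdot)$ for the $L_2$-function $\xi\mapsto\lambda\int_E K_2(\xi,s,x_*(s))[u_n(s)-u_*(s)]\,ds$, these bounds yield $\left\|x_n(\cdot)-x_*(\cdot)\right\|_2\le q\left\|x_n(\cdot)-x_*(\cdot)\right\|_2+\left\|h_n(\cdot)\right\|_2$, and therefore $\left\|x_n(\cdot)-x_*(\cdot)\right\|_2\le (1-q)^{-1}\left\|h_n(\cdot)\right\|_2$.

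It then remains to prove $\left\|h_n(\cdot)\right\|_2\to 0$, and this is the step I expect to be the main obstacle, as it is where the mere weak convergence of the controls must be upgraded to norm convergence of the corresponding integral terms. The key is that the linear operator $A\colon v(\cdot)\mapsto\big(\xi\mapsto\int_E K_2(\xi,s,x_*(s))v(s)\,ds\big)$ has kernel $K_2(\xi,s,x_*(s))$ which, by condition 2.C and $x_*(\cdot)\in L_2$, satisfies $\int_E\int_E\left\|K_2(\xi,s,x_*(s))\right\|^2\,ds\,d\xi<+\infty$; indeed the bound $\left\|K_2(\xi,s,x_*(s))\right\|\le\left\|K_2(\xi,s,0)\right\|+\gamma_2(\xi,s)\left\|x_*(s)\right\|$ together with $\gamma_2(\cdot,\cdot)\in L_{\infty}$, $K_2(\cdot,\cdot,0)\in L_2$ and $x_*(\cdot)\in L_2$ gives finiteness. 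Hence $A$ is a Hilbert--Schmidt, and in particular compact, operator from $L_2\left(E;\mathbb{R}^m\right)$ into $L_2\left(E;\mathbb{R}^n\right)$; the continuous approximation $g_{\varepsilon}(\cdot,\cdot)$ from \eqref{kacon} furnishes the elementary route to this fact, since continuous (degenerate) kernels are dense in $L_2(E\times E)$ and their operators are compact, and the operator norm is controlled by the $L_2$ norm of the kernel. A compact operator carries weakly convergent sequences to norm-convergent ones, and $u_n(\cdot)-u_*(\cdot)\rightharpoonup 0$, so $h_n(\cdot)=\lambda A(u_n-u_*)\to 0$ strongly in $L_2$. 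Combined with the contraction estimate this forces $x_n(\cdot)\to x_*(\cdot)$ in $L_2$, completing the proof that $\mathbb{Z}_{\rho}$ is compact.
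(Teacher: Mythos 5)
Your proof is correct, but note that this paper contains no proof of Theorem \ref{teo2.2} to compare against: the result is quoted from reference \cite{hus7} (where the compactness of the set of $L_2$ trajectories is the main theorem), so your argument can only be checked on its own merits and against what that reference plausibly does. On its merits it is sound, and it is the natural argument for this affine-in-control structure. The three key points all check out: (i) $V_{\rho}$ is a closed bounded convex set in a Hilbert space, hence weakly sequentially compact and weakly closed, so the extraction $u_n(\cdot)\rightharpoonup u_*(\cdot)\in V_{\rho}$ is legitimate; (ii) your contraction coefficient is right, since the pointwise estimates combined with Cauchy--Schwarz give
\begin{equation*}
\left\|x_n(\cdot)-x_*(\cdot)\right\|_2 \leq \left[\kappa_0+\lambda\kappa_1+\lambda\kappa_2\rho\sqrt{\mu(E)}\,\right]\left\|x_n(\cdot)-x_*(\cdot)\right\|_2+\left\|h_n(\cdot)\right\|_2 ,
\end{equation*}
and condition 2.D with $(a+b+c)^2\leq 3(a^2+b^2+c^2)$ yields $q^2<\tfrac12$, so $1-q>0$ and the inequality can be solved for $\left\|x_n(\cdot)-x_*(\cdot)\right\|_2$; (iii) the kernel $(\xi,s)\mapsto K_2(\xi,s,x_*(s))$ is indeed in $L_2(E\times E)$ by condition 2.C (since $\gamma_2(\cdot,\cdot)\in L_\infty$, $K_2(\cdot,\cdot,0)\in L_2$ and $x_*(\cdot)\in L_2$), so the associated integral operator is Hilbert--Schmidt, hence compact, hence maps the weak convergence $u_n(\cdot)-u_*(\cdot)\rightharpoonup 0$ to strong convergence $h_n(\cdot)\to 0$. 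This weak-to-strong continuity of the control-to-trajectory map is precisely the mechanism one expects the cited reference to use, and it is also consistent with how the present paper exploits the structure elsewhere (e.g., the $L_2$ approximation of $K_2(\cdot,\cdot,0)$ by continuous kernels in \eqref{kacon}). One stylistic remark: your contraction estimate handles the $f$-term via $\kappa_0\left\|x_n(\cdot)-x_*(\cdot)\right\|_2$ inside the additive constant $q$, whereas the present paper, in its own arguments (e.g., around \eqref{eq4}--\eqref{eq7}), first moves the $\kappa_0\left\|\cdot\right\|$ term to the left-hand side and divides by $1-\kappa_0$ before applying its Gronwall-type Proposition \ref{prop2.4}; both devices are valid, and yours avoids invoking Proposition \ref{prop2.4} altogether because the dependence on $x_n-x_*$ collapses to a scalar multiple after Cauchy--Schwarz.
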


The validity of the following proposition follows from compactness of the set of trajectories $\mathbb{Z}_{\rho}.$
\begin{proposition} \label{prop2.3}
For every $\varepsilon >0$ there exists $\tau_*(\varepsilon)\in \left(0,\frac{\varepsilon^2}{\left[M(\varepsilon)\right]^2}\right]$ such that for any Lebesgue measurable set $E_*\subset E$, where $\mu(E_*)\leq \tau_*(\varepsilon)$, the inequality
\begin{equation*}
\int_{E_*} \left\|z(s)\right\|^2 ds  \leq \varepsilon^2
\end{equation*} is satisfied for every $z(\cdot)\in \mathbb{Z}_{\rho}$. Here $M(\varepsilon)$ is defined by (\ref{emep}).
\end{proposition}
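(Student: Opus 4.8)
The plan is to exploit the compactness of $\mathbb{Z}_\rho$ in $L_2(E;\mathbb{R}^n)$ established in Theorem \ref{teo2.2} in order to upgrade the absolute continuity of the Lebesgue integral, which holds for each individual trajectory, into a statement uniform over the whole family. First I would fix $\varepsilon>0$ and invoke total boundedness: since $\mathbb{Z}_\rho$ is compact it admits a finite $\tfrac{\varepsilon}{2}$-net $z_1(\cdot),\ldots,z_N(\cdot)\in\mathbb{Z}_\rho$, so that for every $z(\cdot)\in\mathbb{Z}_\rho$ there is an index $j\in\{1,\ldots,N\}$ with $\left\|z(\cdot)-z_j(\cdot)\right\|_2\leq\tfrac{\varepsilon}{2}$.

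Next, for each net element the scalar function $\left\|z_j(\cdot)\right\|^2$ belongs to $L_1(E;\mathbb{R}^1)$, since $\int_E\left\|z_j(s)\right\|^2ds=\left\|z_j(\cdot)\right\|_2^2<+\infty$. Hence, by the absolute continuity of the Lebesgue integral, there exists $\delta_j>0$ such that $\int_{E_*}\left\|z_j(s)\right\|^2ds\leq\tfrac{\varepsilon^2}{4}$ whenever $E_*\subset E$ is measurable with $\mu(E_*)\leq\delta_j$. I would then set
\[
\tau_*(\varepsilon)=\min\left\{\delta_1,\ldots,\delta_N,\frac{\varepsilon^2}{\left[M(\varepsilon)\right]^2}\right\},
\]
which by construction lies in the prescribed interval $\left(0,\tfrac{\varepsilon^2}{\left[M(\varepsilon)\right]^2}\right]$.

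Finally, for an arbitrary $z(\cdot)\in\mathbb{Z}_\rho$ and any measurable $E_*\subset E$ with $\mu(E_*)\leq\tau_*(\varepsilon)$, I would pick a nearby net element $z_j(\cdot)$ and split the $L_2$-norm over $E_*$ using the triangle inequality in $L_2(E_*;\mathbb{R}^n)$:
\[
\left(\int_{E_*}\left\|z(s)\right\|^2ds\right)^{\frac{1}{2}}\leq\left(\int_{E_*}\left\|z(s)-z_j(s)\right\|^2ds\right)^{\frac{1}{2}}+\left(\int_{E_*}\left\|z_j(s)\right\|^2ds\right)^{\frac{1}{2}}.
\]
The first summand is bounded by $\left\|z(\cdot)-z_j(\cdot)\right\|_2\leq\tfrac{\varepsilon}{2}$, and the second by $\tfrac{\varepsilon}{2}$ because $\tau_*(\varepsilon)\leq\delta_j$; adding these and squaring yields $\int_{E_*}\left\|z(s)\right\|^2ds\leq\varepsilon^2$, as required.

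The only genuine obstacle is the uniformity. Absolute continuity of the integral is an elementary, per-function property, whereas the whole content of the statement is that a single threshold $\tau_*(\varepsilon)$ works simultaneously for the entire, generally infinite, family $\mathbb{Z}_\rho$. Compactness is precisely what dissolves this difficulty, by reducing the family to finitely many representatives, after which the $L_2$ triangle inequality transfers the estimate from the net to all trajectories. I do not expect the extra requirement $\tau_*(\varepsilon)\leq\tfrac{\varepsilon^2}{\left[M(\varepsilon)\right]^2}$ to cause any trouble, since it is simply absorbed into the minimum.
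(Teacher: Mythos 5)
Your proposal is correct and takes precisely the approach the paper intends: the paper gives no written proof at all, only the remark that the proposition ``follows from compactness of the set of trajectories $\mathbb{Z}_{\rho}$,'' and your argument --- a finite $\frac{\varepsilon}{2}$-net from total boundedness, absolute continuity of the Lebesgue integral for each net element, and the $L_2$ triangle inequality to transfer the bound to every trajectory, with the constraint $\tau_*(\varepsilon)\leq \frac{\varepsilon^2}{\left[M(\varepsilon)\right]^2}$ absorbed into the minimum --- is exactly the standard way to turn that compactness into uniform absolute continuity. In effect, you have supplied the details the paper leaves to the reader.
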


The next propositions will frequently used in proof of the main result of the paper.
\begin{proposition} \label{prop2.4}
Let $y(\cdot) \in L_2\left(E; [0,+\infty)\right),$ $h(\cdot) \in L_2\left(E; [0,+\infty)\right),$ $\psi(\cdot,\cdot) \in L_2\left(E\times E; [0,+\infty)\right),$ $\displaystyle \left\| \psi (\cdot,\cdot)\right\|_2<\frac{1}{\sqrt{2}}$ and
\begin{equation*}
y(\xi) \leq h(\xi)+\int_{E} \psi (\xi,s) y(s) ds
\end{equation*} for a.a. $\xi \in E.$ Then
\begin{equation*}
\left\|y(\cdot)\right\|_2 \leq \left[ \frac{\displaystyle 2\left\|h(\cdot)\right\|_2^2}{\displaystyle 1-2 \left\|\psi (\cdot,\cdot)\right\|_2^2} \right]^{\frac{1}{2}}.
\end{equation*}
\end{proposition}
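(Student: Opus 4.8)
The plan is to collapse the integral inequality into a single scalar inequality for the quantity $\left\|y(\cdot)\right\|_2^2$, and then solve for it. The mechanism is: square the pointwise estimate, split the square by the elementary bound $(a+b)^2 \leq 2a^2+2b^2$, estimate the integral term by the Cauchy--Schwarz inequality, and integrate over $\xi$. Because $y(\cdot)$, $h(\cdot)$ and $\psi(\cdot,\cdot)$ are nonnegative by hypothesis, squaring the assumed inequality preserves its direction, so no sign issues arise.

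First I would fix $\xi \in E$ (outside the null set where the hypothesis fails), square both sides, and apply $(a+b)^2 \leq 2a^2+2b^2$ to obtain
\[
y(\xi)^2 \leq 2 h(\xi)^2 + 2\left(\int_{E} \psi(\xi,s) y(s)\, ds\right)^2 .
\]
Next, applying the Cauchy--Schwarz inequality to the inner integral gives
\[
\left(\int_{E} \psi(\xi,s) y(s)\, ds\right)^2 \leq \left(\int_{E} \psi(\xi,s)^2\, ds\right) \left\|y(\cdot)\right\|_2^2 ,
\]
so that
\[
y(\xi)^2 \leq 2 h(\xi)^2 + 2 \left\|y(\cdot)\right\|_2^2 \int_{E} \psi(\xi,s)^2\, ds .
\]
Integrating this last inequality over $\xi \in E$ and recognizing that $\int_E \int_E \psi(\xi,s)^2\, ds\, d\xi = \left\|\psi(\cdot,\cdot)\right\|_2^2$ yields the scalar inequality
\[
\left\|y(\cdot)\right\|_2^2 \leq 2\left\|h(\cdot)\right\|_2^2 + 2 \left\|\psi(\cdot,\cdot)\right\|_2^2\, \left\|y(\cdot)\right\|_2^2 .
\]

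The only point requiring care, and the one I expect to be the genuine (if modest) obstacle, is the rearrangement: to transpose the term $2\left\|\psi(\cdot,\cdot)\right\|_2^2\left\|y(\cdot)\right\|_2^2$ to the left-hand side one must know that $\left\|y(\cdot)\right\|_2$ is finite, which is exactly guaranteed by the hypothesis $y(\cdot)\in L_2\left(E;[0,+\infty)\right)$. Granting this, I would use the assumption $\left\|\psi(\cdot,\cdot)\right\|_2 < \frac{1}{\sqrt{2}}$, i.e. $1 - 2\left\|\psi(\cdot,\cdot)\right\|_2^2 > 0$, to divide by this positive factor and obtain
\[
\left\|y(\cdot)\right\|_2^2 \leq \frac{2\left\|h(\cdot)\right\|_2^2}{1 - 2\left\|\psi(\cdot,\cdot)\right\|_2^2} ,
\]
after which taking square roots gives the asserted bound. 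Thus the hypothesis on $\left\|\psi\right\|_2$ is precisely what makes the denominator positive, and the $L_2$-membership of $y(\cdot)$ is precisely what licenses the subtraction; both must be invoked explicitly.
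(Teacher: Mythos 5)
Your proof is correct: the chain of squaring (valid since both sides are nonnegative), the bound $(a+b)^2\le 2a^2+2b^2$, Cauchy--Schwarz in the $s$-variable, integration in $\xi$ via Tonelli, and the rearrangement justified by $\left\|y(\cdot)\right\|_2<+\infty$ and $1-2\left\|\psi(\cdot,\cdot)\right\|_2^2>0$ is exactly the standard argument this lemma calls for. The paper states Proposition \ref{prop2.4} without proof, so there is nothing to contrast it with; your write-up supplies the intended reasoning, including the two hypotheses (finiteness of $\left\|y(\cdot)\right\|_2$ and smallness of $\left\|\psi(\cdot,\cdot)\right\|_2$) that make the final algebraic step legitimate.
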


\begin{proposition} \label{prop2.41}
Let $u(\cdot) \in V_{\rho}$ and
\begin{equation*}
\psi_*(\xi,s) =\frac{\lambda}{1-\kappa_0}\left[ \gamma_1(\xi,s)+\kappa_2 \left\|u(s)\right\|\right]
\end{equation*} for a.a. $(\xi,s) \in E\times E.$ Then
\begin{equation*}
\displaystyle \left\| \psi_* (\cdot,\cdot)\right\|_2 \leq \frac{\sqrt{2} \lambda}{1-\kappa_0} \left[\kappa_1^2+\rho^2 \kappa_2^2\mu(E)\right]^{\frac{1}{2}} <\frac{1}{\sqrt{2}}.
\end{equation*}
\end{proposition}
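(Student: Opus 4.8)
The plan is to estimate $\|\psi_*(\cdot,\cdot)\|_2^2$ directly from its definition and then invoke condition 2.D to secure the strict bound. First I would record that 2.D forces $6\kappa_0^2<1$, hence $\kappa_0<1$, so the denominator $1-\kappa_0$ is positive and $\psi_*$ is well defined, and I would write out
$$\left\|\psi_*(\cdot,\cdot)\right\|_2^2 = \frac{\lambda^2}{(1-\kappa_0)^2}\int_E\int_E \left[\gamma_1(\xi,s)+\kappa_2\left\|u(s)\right\|\right]^2 ds\,d\xi.$$

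Next I would apply the elementary inequality $(a+b)^2\leq 2a^2+2b^2$ to the integrand, splitting the double integral into $2\int_E\int_E \gamma_1(\xi,s)^2\,ds\,d\xi$, which equals $2\kappa_1^2$ by the definition of $\kappa_1$, and $2\kappa_2^2\int_E\int_E\|u(s)\|^2\,ds\,d\xi$. For the second term, performing the inner integration in $s$ first gives $\int_E\int_E\|u(s)\|^2\,ds\,d\xi=\mu(E)\left\|u(\cdot)\right\|_2^2\leq \rho^2\mu(E)$, since $u(\cdot)\in V_\rho$. Collecting terms yields
$$\left\|\psi_*(\cdot,\cdot)\right\|_2^2\leq \frac{2\lambda^2}{(1-\kappa_0)^2}\left[\kappa_1^2+\rho^2\kappa_2^2\mu(E)\right],$$
and taking square roots establishes the first inequality in the statement.

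It remains to prove the strict inequality, which after squaring is equivalent to $4\lambda^2[\kappa_1^2+\rho^2\kappa_2^2\mu(E)]<(1-\kappa_0)^2$. This is where condition 2.D enters. Writing $C=\lambda^2[\kappa_1^2+\rho^2\kappa_2^2\mu(E)]$, inequality 2.D reads $6\kappa_0^2+6C<1$, so $C<(1-6\kappa_0^2)/6$ and therefore $4C<\tfrac{2}{3}(1-6\kappa_0^2)$. It thus suffices to verify the elementary inequality $\tfrac{2}{3}(1-6\kappa_0^2)\leq(1-\kappa_0)^2$, which rearranges to $15\kappa_0^2-6\kappa_0+1\geq 0$.

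The final quadratic has discriminant $36-60=-24<0$ and positive leading coefficient, so it is nonnegative for every real $\kappa_0$; chaining the inequalities then gives $4C<(1-\kappa_0)^2$, as required. The only genuinely delicate point is this last step: the coefficient $6$ in condition 2.D is calibrated precisely so that the resulting quadratic in $\kappa_0$ has negative discriminant. I expect that confirming this numerical margin — seemingly routine — is the part that actually carries the weight of the argument, whereas the integral estimates leading to the first inequality are straightforward applications of $(a+b)^2\leq 2a^2+2b^2$ and the definitions of $\kappa_1$, $\kappa_2$.
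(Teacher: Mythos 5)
Your proof is correct, and since the paper states Proposition \ref{prop2.41} without proof (it is one of the auxiliary propositions given only as statements), there is no competing argument to diverge from: your route --- the elementary bound $(a+b)^2\leq 2a^2+2b^2$ plus Tonelli and the definitions of $\kappa_1$, $\kappa_2$ for the first inequality, then condition 2.D and the discriminant check $15\kappa_0^2-6\kappa_0+1\geq 0$ for strictness --- is plainly the intended one, as the factor $\sqrt{2}$ in the stated bound already reflects exactly that splitting. One small correction to your closing remark: the coefficient $6$ in 2.D is not calibrated precisely for this step; the same computation shows any coefficient $\geq 5$ suffices (with $5$ the quadratic becomes $(5\kappa_0-1)^2\geq 0$), so 2.D carries extra slack here, presumably needed elsewhere in the paper.
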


\begin{proposition} \label{prop2.42}
Let $\beta_1>0,$ $\beta_2>0$ and
\begin{equation*}
h_*(\xi) =\beta_1+\beta_2 \left( \int_E  \left\|K_2(\xi,s,0)\right\|^2ds \right)^{\frac{1}{2}}
\end{equation*} for a.a. $\xi \in E.$ Then
\begin{equation*}
\displaystyle \left\| h_* (\cdot)\right\|_2 \leq  \left[2\beta_1^2\mu(E)+2\beta_2^2 \omega_*^2\right]^{\frac{1}{2}}
\end{equation*} where $\omega_*$ is defined by (\ref{om*}).
\end{proposition}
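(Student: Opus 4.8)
The plan is to compute $\|h_*(\cdot)\|_2^2$ directly and bound the integrand pointwise via the elementary quadratic inequality $(p+q)^2 \leq 2p^2 + 2q^2$, valid for all reals since it is equivalent to $0 \leq (p-q)^2$. Writing $a(\xi) = \left(\int_E \|K_2(\xi,s,0)\|^2 \, ds\right)^{1/2}$, so that $h_*(\xi) = \beta_1 + \beta_2 a(\xi)$ for a.a.\ $\xi \in E$, the first step is to expand
\begin{equation*}
\|h_*(\cdot)\|_2^2 = \int_E h_*(\xi)^2 \, d\xi = \int_E \left(\beta_1 + \beta_2 a(\xi)\right)^2 d\xi.
\end{equation*}

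Next I would apply the inequality above with $p = \beta_1$ and $q = \beta_2 a(\xi)$ inside the integral, obtaining $\left(\beta_1 + \beta_2 a(\xi)\right)^2 \leq 2\beta_1^2 + 2\beta_2^2 a(\xi)^2$ for a.a.\ $\xi \in E$. Integrating this pointwise bound over $E$ yields
\begin{equation*}
\|h_*(\cdot)\|_2^2 \leq \int_E \left[2\beta_1^2 + 2\beta_2^2 a(\xi)^2\right] d\xi = 2\beta_1^2 \mu(E) + 2\beta_2^2 \int_E a(\xi)^2 \, d\xi,
\end{equation*}
where the first term uses $\int_E d\xi = \mu(E)$.

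The remaining step is to identify the last integral with $\omega_*^2$. By the definition of $a(\cdot)$ we have $a(\xi)^2 = \int_E \|K_2(\xi,s,0)\|^2 \, ds$, so $\int_E a(\xi)^2 \, d\xi = \int_E \int_E \|K_2(\xi,s,0)\|^2 \, ds \, d\xi = \omega_*^2$ by the definition (\ref{om*}). Substituting this gives $\|h_*(\cdot)\|_2^2 \leq 2\beta_1^2 \mu(E) + 2\beta_2^2 \omega_*^2$, and taking square roots produces the claimed bound.

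There is no genuine obstacle here: the argument is a routine computation resting only on the quadratic inequality and the Fubini-type reading of the iterated integral defining $\omega_*$. The one point deserving a word of care is that the iterated integral $\int_E \int_E \|K_2(\xi,s,0)\|^2 \, ds \, d\xi$ is finite and equals $\omega_*^2$, which is guaranteed by the hypothesis $K_2(\cdot,\cdot,0) \in L_2\left(E \times E; \mathbb{R}^{n \times m}\right)$ from condition \textbf{2.C}; this also ensures $a(\cdot)$ is measurable and square-integrable, so that every expression above is well defined.
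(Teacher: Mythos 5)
Your proof is correct: the paper states Proposition \ref{prop2.42} without proof, and your argument via $(p+q)^2 \leq 2p^2 + 2q^2$, integration over $E$, and Tonelli's theorem to identify $\int_E a(\xi)^2\, d\xi$ with $\omega_*^2$ is exactly the computation the authors intend, as is evident from the factor-of-two structure of the stated bound. Your closing remark on measurability and finiteness of $a(\cdot)$, guaranteed by $K_2(\cdot,\cdot,0) \in L_2\left(E \times E; \mathbb{R}^{n \times m}\right)$ in condition \textbf{2.C}, appropriately covers the only point requiring care.
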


Now let us give definition of finite $\Delta$-partition of the set $Q\subset \mathbb{R}^{k}.$
\begin{definition} \label{def2.1} Let $Q\subset \mathbb{R}^{k}$ be a given set. A finite system of sets
$\Gamma = \{Q_1, Q_2, \ldots, Q_g\}$ is said to be a finite $\Delta$-partition of given $Q$ if

\vspace{2mm}

$\mathbf{2.d_1.}$ $Q_i \subset Q$ and $Q_i$ is Lebesgue measurable for every $i=1,2,\ldots ,g$;

$\mathbf{2.d_2.}$ $Q_i\bigcap Q_j =\emptyset$ for every $i\neq j$, where $i=1,2,\ldots, g$ and $j=1,2,\ldots, g$;

$\mathbf{2.d_3.}$ $Q =\bigcup_{i=1}^{g} Q_i$;

$\mathbf{2.d_4.}$ $diam \, (Q_i) \leq \Delta$ for every $i=1,2,\ldots, g,$ where $diam \, (Q_i)=\sup\big\{\left\|x-y\right\|:$ $x\in Q_i, \ y\in Q_i \big\}$.
\end{definition}
\begin{proposition}\label{prop2.5}
Let $Q\subset \mathbb{R}^{k}$ be a compact set. Then for every $\Delta>0$ it has a finite $\Delta$-partition.
\end{proposition}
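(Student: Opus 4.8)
The plan is to exploit the boundedness of $Q$, which follows from its compactness, and to cut it out of a grid of small half-open cubes whose diameters are controlled by $\Delta$. First I would fix the side length $\delta = \Delta/\sqrt{k}$, so that any axis-aligned cube of side $\delta$ in $\mathbb{R}^k$ has diameter exactly $\delta\sqrt{k} = \Delta$. I would then consider the family of half-open cubes $C_a = \prod_{j=1}^{k} [\,a_j \delta, (a_j+1)\delta\,)$ indexed by $a=(a_1,\ldots,a_k) \in \mathbb{Z}^k$. These cubes are pairwise disjoint and their union is all of $\mathbb{R}^k$, so they constitute a partition of the whole space into Lebesgue measurable sets, each of diameter $\Delta$.

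Next, since $Q$ is compact it is bounded, hence contained in some ball of $\mathbb{R}^k$, and therefore it has nonempty intersection with only finitely many of the cubes $C_a$. Let $C_1, C_2, \ldots, C_g$ enumerate precisely those cubes that meet $Q$, and define $Q_i = C_i \cap Q$ for $i=1,2,\ldots,g$. I would then verify the four requirements of Definition \ref{def2.1} in turn. Each $Q_i$ is a subset of $Q$ and is Lebesgue measurable, being the intersection of the Borel box $C_i$ with the compact, hence measurable, set $Q$, which gives condition $\mathbf{2.d_1}$. Disjointness of the $Q_i$ for $i\neq j$ is inherited from the disjointness of the underlying cubes $C_i$, giving condition $\mathbf{2.d_2}$. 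The identity $\bigcup_{i=1}^{g} Q_i = Q \cap \bigcup_{i=1}^{g} C_i = Q$ holds because the discarded cubes do not meet $Q$ while the grid covers all of $\mathbb{R}^k$, giving condition $\mathbf{2.d_3}$. Finally $\mathrm{diam}\,(Q_i) \leq \mathrm{diam}\,(C_i) = \delta\sqrt{k} = \Delta$, giving condition $\mathbf{2.d_4}$.

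The only point that requires a little care, rather than being entirely automatic, is to secure genuine disjointness and the diameter bound simultaneously. Closed cubes of side $\delta$ also have diameter $\Delta$, but adjacent ones overlap along their common faces, so they would fail condition $\mathbf{2.d_2}$; it is precisely the half-open form $[\,a_j\delta,(a_j+1)\delta\,)$ that yields an honest tiling of $\mathbb{R}^k$ and hence, after intersecting with $Q$, an honest partition of $Q$. Everything else is an immediate consequence of boundedness, which forces the finiteness of $g$, together with the elementary computation of the diameter of an axis-aligned cube.
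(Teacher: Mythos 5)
Your proof is correct: the half-open grid tiling of $\mathbb{R}^k$ by cubes of side $\Delta/\sqrt{k}$, intersected with $Q$, satisfies all four conditions of Definition \ref{def2.1}, with compactness (hence boundedness) of $Q$ guaranteeing that only finitely many cubes are needed. The paper itself states Proposition \ref{prop2.5} without proof, and your argument is precisely the standard one the authors implicitly rely on, including the genuinely necessary observation that the cubes must be taken half-open rather than closed so that condition $\mathbf{2.d_2}$ (pairwise disjointness) holds.
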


\section{Approximation of the Set of Trajectories}

Let $\alpha>0,$ $\Delta >0$ and $\sigma>0$ be given numbers, $\Gamma=\left\{E_1,E_2,\ldots, E_N\right\}$ be a finite $\Delta$-partition of the compact set $E\subset \mathbb{R}^k,$ $\Lambda=\left\{0=r_0,r_1,\ldots, r_q=\alpha \right\}$ be a uniform partition of the closed interval $[0,\alpha]$ with diameter $\delta=r_{j+1}-r_j,$ $j=0,1,\ldots, q-1,$  $S=\left\{x\in \mathbb{R}^m: \left\|x\right\|=1\right\},$ $S_{\sigma}=\left\{b_1,b_2, \ldots, b_c\right\}$ be a finite $\sigma$-net on the compact $S.$ Denote

\begin{align}\label{fincon}
V_{\rho}^{\alpha, \Gamma, \Lambda,\sigma} &= \big\{ u(\cdot)\in L_2(E; \mathbb{R}^m): u(s)=r_{j_i}b_{l_i} \ \mbox{for every} \ s \in E_i, \nonumber \\ & \qquad r_{j_i} \in \Lambda, \ b_{l_i}\in S_{\sigma},  \ i=1,2,\ldots, N, \ \sum_{i=1}^{N} \mu(E_i)r_{j_i}^2 \leq \rho^2 \big\}.
\end{align}

By symbol $\mathbb{Z}_{\rho}^{\alpha,\Gamma,\Lambda,\sigma}$ we denote the set of trajectories of the system (\ref{ue1}) generated by all control functions $u(\cdot)\in V_{\rho}^{\alpha,\Gamma,\Lambda,\sigma}.$ It is obvious that the set $V_{\rho}^{\alpha, \Gamma, \Lambda,\sigma}$ consists of a finite number of piecewise-constant functions, the set $\mathbb{Z}_{\rho}^{\alpha, \Gamma, \Lambda,\sigma}$ consists of a finite number of trajectories.

Let
\begin{equation}\label{ce*}
c_*=\left[ \frac{\displaystyle 16 \lambda^2 \rho^2 \left[ \left(\kappa_2+1\right)^2 \mu(E)+1\right]}{(1-\kappa_0)^2- 4\lambda^2 \left[\kappa_1^2+\rho^2 \kappa_2^2 \mu(E)\right] } \right]^{\frac{1}{2}}.
\end{equation}

\begin{theorem}\label{teo3.1}
For every $\varepsilon >0$ there exist $\alpha_*(\varepsilon)>0$, $\Delta_*(\varepsilon)>0$, $\delta_*(\varepsilon)>0$, $\sigma_*(\varepsilon,\alpha_*(\varepsilon))>0$ such that for any $\Delta \in \left(0,\Delta_*(\varepsilon)\right],$ $\delta \in \left(0,\delta_*(\varepsilon)\right]$ and $\sigma \in (0,\sigma_*(\varepsilon,\alpha_*(\varepsilon))]$ the inequality
\begin{equation}\label{os1}
h_{2}\left(\mathbb{Z}_{p,r}, \mathbb{Z}_{p,r}^{\alpha_*(\varepsilon),\Gamma, \Lambda,\sigma}\right) \leq (c_*+1) \varepsilon
\end{equation}
is satisfied where  $\Gamma=\left\{E_1,E_2,\ldots, E_N\right\}$ is a finite $\Delta$-partition of the compact set  $E,$ \linebreak $\Lambda=\left\{0=r_0,r_1,\ldots, r_q=\alpha_*(\varepsilon) \right\}$ is a uniform partition of the closed interval  $[0,\alpha_*(\varepsilon)]$, $\delta=r_{j+1}-r_j$, $j=0,1,\ldots,q-1$, is the diameter of partition $\Lambda$, $h_2(\cdot,\cdot)$ stands for the Hausdorff distance between the subsets of the space $L_2\left(E; \mathbb{R}^n\right)$, $c_*$ is defined by \eqref{ce*}.
\end{theorem}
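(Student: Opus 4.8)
The plan is to exploit that the discretized control class is contained in the admissible class. Because $\|b_{l_i}\|=1$ for every $b_{l_i}\in S_\sigma$, each $u(\cdot)\in V_\rho^{\alpha,\Gamma,\Lambda,\sigma}$ satisfies $\|u(\cdot)\|_2^2=\sum_{i=1}^N\mu(E_i)r_{j_i}^2\le\rho^2$ by the constraint in \eqref{fincon}, so $V_\rho^{\alpha,\Gamma,\Lambda,\sigma}\subset V_\rho$ and hence $\mathbb{Z}_\rho^{\alpha_*(\varepsilon),\Gamma,\Lambda,\sigma}\subset\mathbb{Z}_\rho$. Consequently one of the two one-sided deviations in the Hausdorff distance vanishes, and it suffices to prove the inclusion-type estimate: for every $x(\cdot)\in\mathbb{Z}_\rho$ there is $\tilde x(\cdot)\in\mathbb{Z}_\rho^{\alpha_*(\varepsilon),\Gamma,\Lambda,\sigma}$ with $\|x(\cdot)-\tilde x(\cdot)\|_2\le(c_*+1)\varepsilon$.

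I would first fix the continuous surrogate $g_\varepsilon(\cdot,\cdot)$ and the constant $M(\varepsilon)$ from \eqref{kacon}--\eqref{emep}. Then I choose $\alpha_*(\varepsilon)$ so large that for every $u(\cdot)\in V_\rho$ the set $E_*=\{s\in E:\|u(s)\|>\alpha_*(\varepsilon)\}$ satisfies $\mu(E_*)\le\rho^2/\alpha_*(\varepsilon)^2\le\tau_*(\varepsilon)$, with $\tau_*(\varepsilon)$ supplied by Proposition \ref{prop2.3}; next $\Delta_*(\varepsilon)$ is taken small enough to exploit the uniform continuity of $g_\varepsilon$ on $E\times E$, and finally $\delta_*(\varepsilon)$ and $\sigma_*(\varepsilon,\alpha_*(\varepsilon))$ are chosen small, the latter depending on $\alpha_*(\varepsilon)$ because a direction error of size $\sigma$ produces a vector error of size at most $\alpha_*(\varepsilon)\sigma$. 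Given $x(\cdot)\in\mathbb{Z}_\rho$ generated by $u(\cdot)$, I construct $\tilde u(\cdot)\in V_\rho^{\alpha,\Gamma,\Lambda,\sigma}$ cellwise: on each $E_i$ I take the mean value of the truncated control, approximate its direction by some $b_{l_i}\in S_\sigma$ and round its magnitude to the nearest smaller grid point $r_{j_i}\in\Lambda$. Rounding downwards together with Jensen's inequality for the cell means guarantees $\sum_{i=1}^N\mu(E_i)r_{j_i}^2\le\rho^2$, so $\tilde u(\cdot)$ is admissible.

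Subtracting the two instances of \eqref{ue1}, invoking the Lipschitz hypotheses 2.A--2.C and writing the control term as $[K_2(\xi,s,x(s))-K_2(\xi,s,\tilde x(s))]\tilde u(s)+K_2(\xi,s,x(s))[u(s)-\tilde u(s)]$, I divide by $1-\gamma_0(\xi)\ge1-\kappa_0$ to obtain, with $y(\xi)=\|x(\xi)-\tilde x(\xi)\|$, the inequality $y(\xi)\le\int_E\psi_*(\xi,s)y(s)\,ds+h(\xi)$, where $\psi_*$ is precisely the kernel of Proposition \ref{prop2.41} (hence $\|\psi_*(\cdot,\cdot)\|_2<1/\sqrt2$) and $h(\xi)=\frac{\lambda}{1-\kappa_0}\left\|\int_E K_2(\xi,s,x(s))[u(s)-\tilde u(s)]\,ds\right\|$. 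Proposition \ref{prop2.4} then yields $\|y(\cdot)\|_2\le[2\|h(\cdot)\|_2^2/(1-2\|\psi_*\|_2^2)]^{1/2}$, and substituting the bound of Proposition \ref{prop2.41} for $1-2\|\psi_*\|_2^2$ reproduces exactly the denominator of $c_*$ in \eqref{ce*}. The whole estimate thus reduces to proving $\|h(\cdot)\|_2\le\frac{2\sqrt2\,\lambda\rho\,[(\kappa_2+1)^2\mu(E)+1]^{1/2}}{1-\kappa_0}\,\varepsilon$, which is exactly the numerator of $c_*$ and gives $\|y(\cdot)\|_2\le c_*\varepsilon$.

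The hard part, and the step I expect to be the main obstacle, is this bound on the forcing term $\|h(\cdot)\|_2$, since $\|u(\cdot)-\tilde u(\cdot)\|_2$ is \emph{not} uniformly small over the non-compact ball $V_\rho$, so one cannot pull $\|K_2\|$ out of the integral; the vector integral and its cancellations must be retained. I would split $K_2(\xi,s,x(s))=g_\varepsilon(\xi,s)+[K_2(\xi,s,0)-g_\varepsilon(\xi,s)]+[K_2(\xi,s,x(s))-K_2(\xi,s,0)]$. Against the continuous part $g_\varepsilon$ the cellwise near-cancellation $\int_{E_i}[u(s)-\tilde u(s)]\,ds\approx0$, combined with the uniform continuity of $g_\varepsilon$ and the bound $M(\varepsilon)$, makes the contribution small once $\Delta,\delta,\sigma$ are small; the term $K_2(\xi,s,0)-g_\varepsilon(\xi,s)$ is $L_2(E\times E)$-small by \eqref{kacon} and is absorbed after Cauchy--Schwarz using $\|u-\tilde u\|_2\le2\rho$. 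The genuinely delicate piece is the Lipschitz remainder $K_2(\xi,s,x(s))-K_2(\xi,s,0)$, whose norm only obeys the a priori bound $\kappa_2\|x(s)\|$: on the truncation set $E_*$ it is controlled by the uniform integrability $\int_{E_*}\|x(s)\|^2\,ds\le\varepsilon^2$ of Proposition \ref{prop2.3}, while on the cells its pairing with $u-\tilde u$ is rendered uniformly small by the compactness of $\mathbb{Z}_\rho$ (Theorem \ref{teo2.2}), since the family $\{K_2(\cdot,\cdot,x(\cdot))-K_2(\cdot,\cdot,0):x(\cdot)\in\mathbb{Z}_\rho\}$ is a compact subset of $L_2(E\times E)$ and so its cellwise-mean approximation in the $s$-variable converges to it uniformly as $\Delta\to0$. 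Collecting these three contributions produces the stated bound on $\|h(\cdot)\|_2$ and hence $\|y(\cdot)\|_2\le c_*\varepsilon$; the remaining summand $\varepsilon$ in \eqref{os1} is the trajectory error incurred by the preliminary magnitude truncation of $u(\cdot)$, again estimated through Proposition \ref{prop2.3}, which gives the final $(c_*+1)\varepsilon$.
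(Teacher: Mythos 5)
Your proposal is correct in its essentials, but it follows a genuinely different route from the paper's proof. The paper interposes a chain of intermediate control classes: truncation (this is where $c_*\varepsilon$ arises, via Chebyshev's inequality, Proposition \ref{prop2.3} and the splitting of $K_2$ through $g_{\varepsilon}$), then Steklov averaging to pass to Lipschitz controls with zero Hausdorff gap, then compactness of $\mathbb{Z}_{\rho}$ (Theorem \ref{teo2.2}) applied to the increasing union $\bigcup_{R}\mathbb{Z}_{\rho}^{\alpha_*(\varepsilon),Lip,R}$ to fix one uniform Lipschitz constant $R_*(\varepsilon)$, and only then the three discretizations (cells, magnitudes, directions), each costing $\varepsilon/4$; for an $R_*(\varepsilon)$-Lipschitz control the cell average obeys the pointwise bound $R_*(\varepsilon)\Delta$, so no cancellation argument is ever needed. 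You instead discretize a merely truncated control directly and replace the missing pointwise bound by a cancellation argument: in the decomposition $u-\tilde u=(u-u_*)+(u_*-\bar u)+(\bar u-\tilde u)$ only the middle term, which has zero cell means, is dangerous, and pairing it against $K_2(\xi,s,x(s))$ costs, after subtracting the cell average of the kernel in the $s$-variable and applying Cauchy--Schwarz, at most $2\rho\,\|(I-P_{\Gamma})K_2(\cdot,\cdot,x(\cdot))\|_{L_2(E\times E)}$; you then use that $\left\{K_2(\cdot,\cdot,x(\cdot)): x\in\mathbb{Z}_{\rho}\right\}$ is compact in $L_2(E\times E)$ (Lipschitz image of the compact $\mathbb{Z}_{\rho}$, by Condition 2.C) and that the averaging projections $P_{\Gamma}$ are contractions converging strongly to the identity, hence uniformly on that compact family, as $\Delta\to 0$. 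This is sound and shorter: it eliminates the Steklov/Lipschitz stages entirely. What it costs is an extra (standard, but nowhere stated in the paper) functional-analytic lemma and a less explicit choice of $\Delta_*(\varepsilon)$; the paper keeps every estimate pointwise and explicit, its single soft step being the extraction of $R_*(\varepsilon)$.

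One caveat on constants: your bookkeeping is inverted relative to the paper and does not quite close. In the paper the truncation step itself costs $c_*\varepsilon$ --- your bound for the $(u-u_*)$ part of the forcing term is literally the computation \eqref{eq6}--\eqref{eq12} --- while the discretizations contribute the extra $\varepsilon$. You claim instead that the truncation costs $\varepsilon$ and that the discretized forcing term reproduces exactly the numerator of $c_*$; in fact your $h(\cdot)$ contains the truncation contribution (which already exhausts that numerator) plus further $\varepsilon$-proportional terms, e.g. $\frac{2\lambda\rho}{1-\kappa_0}\|(I-P_{\Gamma})\left(K_2(\cdot,\cdot,0)-g_{\varepsilon}(\cdot,\cdot)\right)\|_2\leq \frac{2\lambda\rho}{1-\kappa_0}\varepsilon$ from \eqref{kacon}, so the total constant strictly exceeds $c_*+1$. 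This is harmless for the theorem as stated, because the bound is linear in $\varepsilon$ and all parameters are allowed to depend on $\varepsilon$: one simply runs your argument with $\varepsilon$ replaced by $(c_*+1)\varepsilon/C$, where $C$ is the constant your estimates actually produce. But the assertion that $\|h(\cdot)\|_2$ is bounded by exactly the numerator of $c_*$ in \eqref{ce*} should be dropped or repaired before the argument can be called a proof of the stated inequality.
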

\begin{proof}  For given $\alpha>0$ we set
\begin{equation*}
V_{\rho}^{\alpha}=\left\{u(\cdot) \in V_{\rho}: \left\| u(s) \right\| \leq \alpha \ \mbox{for a.a.} \ s\in E \right\} \, ,
\end{equation*} and let $\mathbb{Z}_{\rho}^{\alpha}$ be the set of trajectories of the system (\ref{ue1}) generated by all control functions $u(\cdot)\in V_{\rho}^{\alpha}.$

Denote
\begin{equation}\label{alfa*}
\alpha_*(\varepsilon)=\frac{\rho}{\tau_*(\varepsilon)^{\frac{1}{2}}}
\end{equation} where $\tau_*(\varepsilon)$ is defined in Proposition \ref{prop2.3}.

Let $\alpha\geq \alpha_*(\varepsilon)$ be fixed. Choose an arbitrary $z(\cdot) \in \mathbb{Z}_{\rho}$ generated by the control function $v(\cdot) \in V_{\rho}.$ Define new control function $v_*(\cdot):E\rightarrow \mathbb{R}^m$, setting
\begin{eqnarray}\label{eq2}
v_*(s)=\left\{
\begin{array}{llll}
v(s)  & \mbox{if} & \left\|v(s)\right\| \leq \alpha, \\
\displaystyle \frac{v(s)}{\left\|v(s)\right\|} \alpha  & \mbox{if} & \left\|v(s)\right\| > \alpha
\end{array}
\right.
\end{eqnarray}
It is obvious that $v_*(\cdot) \in V_{\rho}^{\alpha}.$ Let $z_*(\cdot):E\rightarrow \mathbb{R}^n$ be the trajectory of the system (\ref{ue1}) generated by the control function $v_*(\cdot)$ and $E_*=\left\{s\in E: \left\|v(s)\right\|>\alpha\right\}.$ Then $z_*(\cdot) \in \mathbb{Z}_{\rho}^{\alpha}.$ From Tchebyshev's inequality (see, \cite{whe}, p.82) it follows that
\begin{equation} \label{eq3}
\mu(E_*) \leq \frac{\rho^2}{\alpha^2}.
\end{equation}

Since $\alpha \geq \alpha_*(\varepsilon),$ then from (\ref{alfa*}) and (\ref{eq3}) it follows that
\begin{equation} \label{mue*}
\mu(E_*) \leq \frac{\rho^2}{\alpha^2} \leq \tau_*(\varepsilon) .
\end{equation}

On behalf of conditions 2.A, 2.B, 2.C and (\ref{eq2}) we have
\begin{align}\label{eq4}
\left\| z(\xi)-z_*(\xi) \right\| & \leq  \kappa_0 \cdot \left\|z(\xi)-z_*(\xi)\right\|   +\lambda \int_{E} \big[\gamma_1(\xi,s)+ \kappa_2\left\|v(s)\right\|\big]\cdot \left\| z(s)-z_*(s) \right\| ds \nonumber \\ & \quad +\lambda \int_{E_*} \left\|K_2(\xi,s,z_*(s))\right\| \cdot \left\| v(s)-v_*(s)\right\| ds
\end{align} for a.a. $\xi \in E.$

Condition 2.C, (\ref{emep}), (\ref{mue*}),  inclusions $v(\cdot) \in V_{\rho},$ $v_*(\cdot) \in V_{\rho},$ $z_*(\cdot) \in \mathbb{Z}_{\rho},$ Proposition \ref{prop2.3} and Cauchy-Schwarz inequality  imply that
\begin{align}\label{eq6}
& \int_{E_*} \left\|K_2(\xi,s,z_*(s))\right\| \cdot \left\| v(s)-v_*(s)\right\| ds  \leq  \kappa_2 \int_{E_*} \left\|z_*(s)\right\| \cdot \left\| v(s)-v_*(s)\right\| ds \nonumber \\ & \qquad + \int_{E_*} \left\|K_2(\xi,s,0)-g_{\varepsilon}(\xi,s)\right\| \cdot \left\| v(s)-v_*(s)\right\| ds + \int_{E_*} \left\|g_{\varepsilon}(\xi,s)\right\| \cdot \left\| v(s)-v_*(s)\right\| ds \nonumber \\
& \quad \leq  2\rho \kappa_2\varepsilon  +2\rho M(\varepsilon) \left[\mu(E_*)\right]^{\frac{1}{2}} + 2\rho \left( \int_{E} \left\|K_2(\xi,s,0)-g_{\varepsilon}(\xi,s)\right\|^2 ds \right)^{\frac{1}{2}} \nonumber \\
& \quad \leq 2\rho \left( \kappa_2+1\right)\varepsilon  + 2\rho \left( \int_{E} \left\|K_2(\xi,s,0)-g_{\varepsilon}(\xi,s)\right\|^2 ds \right)^{\frac{1}{2}}
\end{align} for a.a. $\xi \in E.$ On behalf of the Condition 2.D we have that $\kappa_0<1.$ Then, from (\ref{eq4}) and (\ref{eq6}) it follows that
\begin{align}\label{eq7}
& \left\| z(\xi)-z_*(\xi) \right\|  \leq   \frac{\lambda}{1-\kappa_0} \int_{E} \big[\gamma_1(\xi,s)+ \kappa_2\left\|v(s)\right\|\big]\cdot \left\| z(s)-z_*(s) \right\| ds \nonumber \\ & \qquad  + \frac{2\lambda \rho \left(\kappa_2+1\right)}{1-\kappa_0}\varepsilon +\frac{2\lambda \rho}{1-\kappa_0} \left( \int_{E} \left\|K_2(\xi,s,0)-g_{\varepsilon}(\xi,s)\right\|^2 ds \right)^{\frac{1}{2}}
\end{align}
for a.a. $\xi \in E.$ Denote
\begin{equation}\label{eq8}
\psi_1(\xi,s)= \frac{\lambda}{1-\kappa_0}  \big[\gamma_1(\xi,s)+ \kappa_2\left\|v(s)\right\| \big],
\end{equation}
for a.a. $(\xi,s)\in E\times E$ and
\begin{equation}\label{eq9}
h_1(\xi)= \frac{2\lambda \rho \left(\kappa_2+1\right)}{1-\kappa_0}\varepsilon +\frac{2\lambda \rho}{1-\kappa_0} \left( \int_{E} \left\|K_2(\xi,s,0)-g_{\varepsilon}(\xi,s)\right\|^2 ds \right)^{\frac{1}{2}}
\end{equation} for a.a. $\xi \in E.$ Since $v(\cdot) \in V_{\rho}^{\alpha}\subset V_{\rho}$, then (\ref{eq8}) and Proposition \ref{prop2.41} imply that
\begin{equation}\label{eq11}
\displaystyle \left\|\psi_1(\cdot,\cdot)\right\|_2  \leq
\frac{\sqrt{2}\lambda}{1-\kappa_0}  \left[\kappa_1^2+\rho^2 \kappa_2^2 \mu(E)\right]^{\frac{1}{2}}< \frac{1}{\sqrt{2}}.
\end{equation}

By virtue of (\ref{eq9}) have that
\begin{equation*}\label{ax1}
h_1(\xi)^2 \leq \frac{8\lambda^2 \rho^2 \left(\kappa_2+1\right)^2}{(1-\kappa_0)^2}\varepsilon^2 +\frac{8 \lambda^2 \rho^2}{(1-\kappa_0)^2} \int_{E} \left\|K_2(\xi,s,0)-g_{\varepsilon}(\xi,s)\right\|^2 ds
\end{equation*} for a.a. $\xi \in E$. Integrating the last inequality on the set $E$ and taking into consideration the inequality  (\ref{kacon}) we obtain
\begin{equation}\label{eq11*}
\left\|h_1(\cdot)\right\|_2^2 \leq  \frac{8\lambda^2 \rho^2 \left[ \left(\kappa_2+1\right)^2 \mu(E)+1\right]}{(1-\kappa_0)^2} \cdot \varepsilon^2 .
\end{equation}

Now, from   (\ref{eq7}), (\ref{eq8}), (\ref{eq9}), (\ref{eq11}), (\ref{eq11*}) and Proposition \ref{prop2.4} we have
\begin{equation}\label{eq12}
\left\|z(\cdot)-z_*(\cdot)\right\|_2  \leq  \left[ \frac{\displaystyle 16 \lambda^2 \rho^2 \left[ \left(\kappa_2+1\right)^2 \mu(E)+1\right]}{(1-\kappa_0)^2- 4\lambda^2 \left[\kappa_1^2+\rho^2 \kappa_2^2 \mu(E)\right] } \right]^{\frac{1}{2}} \cdot \varepsilon =c_*\varepsilon
\end{equation} where $c_*$ is defined by (\ref{ce*}). Since $z(\cdot) \in \mathbb{Z}_{\rho}$ is an arbitrarily chosen trajectory, $z_*(\cdot) \in \mathbb{Z}_{\rho}^{\alpha},$ then (\ref{eq12}) implies that
\begin{equation}\label{eq13}
\mathbb{Z}_{\rho}  \subset \mathbb{Z}_{\rho}^{\alpha} +c_*\varepsilon \cdot \mathcal{B}_2(1)
\end{equation} where
\begin{equation}\label{be2}
\mathcal{B}_2(1)=\left\{z(\cdot)\in L_2(E;\mathbb{R}^n): \left\|z(\cdot)\right\|_2 \leq 1 \right\}.
\end{equation}

(\ref{eq13}) and inclusion $\mathbb{Z}_{\rho}^{\alpha}  \subset \mathbb{Z}_{\rho}$ imply that for every $\alpha \geq \alpha_*(\varepsilon)$ the inequality
\begin{equation*}
h_{2}\left(\mathbb{Z}_{p,r}, \mathbb{Z}_{p,r}^{\alpha}\right) \leq  c_*\varepsilon
\end{equation*} is satisfied. In particular, we have that the inequality
\begin{equation}\label{eq14}
h_{2}\left(\mathbb{Z}_{p,r}, \mathbb{Z}_{p,r}^{\alpha_*(\varepsilon)}\right) \leq  c_*\varepsilon
\end{equation} holds where $\alpha_*(\varepsilon)$ is defined by (\ref{alfa*}).

Denote
\begin{equation}\label{g1}
g_1 = \left[ \frac{\displaystyle 4\lambda^2 \left[ \kappa_2^2 \beta_*^2 \mu(E)+\omega_*^2\right]}{(1-\kappa_0)^2- 4\lambda^2 \left[\kappa_1^2+\rho^2 \kappa_2^2 \mu(E)\right]} \right]^{\frac{1}{2}}.
\end{equation}

Now we narrow down the set of control functions $V_{\rho}^{\alpha_*(\varepsilon)}$ and define new set of control functions which are Lipschitz continuous and satisfy integral and geometric constraints. We set
\begin{equation}\label{eq15}
V_{\rho}^{\alpha_*(\varepsilon),Lip}=\left\{u(\cdot) \in V_{\rho}^{\alpha_*(\varepsilon)}:  u(\cdot): E \rightarrow \mathbb{R}^m \  \mbox{is Lipschitz continuous}  \right\} \, ,
\end{equation} and let $\mathbb{Z}_{\rho}^{\alpha_*(\varepsilon),Lip}$ be the set of trajectories of the system (\ref{ue1}) generated by all control functions $u(\cdot)\in V_{\rho}^{\alpha_*(\varepsilon),Lip}.$ It will be proved that
\begin{equation}\label{eq16}
h_2\left( \mathbb{Z}_{\rho}^{\alpha_*(\varepsilon)}, \mathbb{Z}_{\rho}^{\alpha_*(\varepsilon),Lip} \right)=0 \, .
\end{equation}

Let us choose an arbitrary $\nu>0$ and $y(\cdot) \in \mathbb{Z}_{\rho}^{\alpha_*(\varepsilon)}$ which is generated by the control function $w(\cdot) \in V_{\rho}^{\alpha_*(\varepsilon)}.$ Now let $\eta_j\in (0,1)$ for every $j=1,2,\ldots$ and $\eta_j \rightarrow 0$ as $j\rightarrow +\infty.$ For each $j=1,2,\ldots$ define functions $w_j(\cdot):E \rightarrow \mathbb{R}^m$, setting
\begin{equation}\label{eq17}
w_{j}(s)= \frac{1}{v_{j}} \int_{B_k(s,\eta_j)}w(\tau)d\tau \, , \ s\in E
\end{equation} where $B_k(s,\eta_j)=\left\{\zeta \in \mathbb{R}^k:\left\|\zeta-s\right\|<\eta_j\right\}$, $v_j$ is the Lebesgue measure of the open ball centered at the origin with radius $\eta_j$ in the space $\mathbb{R}^k$, i.e. $v_j =\mu (B_k(0,\eta_j)).$  If $\tau \not \in E$, then in the equality (\ref{eq17}) it is assumed that $w(\tau)=0.$ The function $w_j(\cdot)$ is called Steklov average of the function $w(\cdot)$ for $\eta_j$. According to \cite{kan} (Lemma 1, p.317) we have that $w_j(\cdot) \in V_{\rho}^{\alpha_*(\varepsilon),Lip}$ for every $j=1,2,\ldots$ and $\left\|w_j(\cdot)-w(\cdot)\right\|_2 \rightarrow 0$ as $j\rightarrow +\infty.$ Thus, we obtain that for given $\nu>0$ there exists $j_*$ such that
\begin{equation}\label{eq18}
\left\| w(\cdot)-w_{j_*}(\cdot)\right\|_2 \leq \nu .
\end{equation}

Let $y_*(\cdot)$ be the trajectory of the system (\ref{ue1}) generated by the control function $w_{j_*}(\cdot) \in V_{\rho}^{\alpha_*(\varepsilon),Lip}.$ Then $y_{*}(\cdot) \in \mathbb{Z}_{\rho}^{\alpha_*(\varepsilon),Lip}.$ From Conditions 2.A, 2.B, 2.C, 2.D, (\ref{eq18}), Proposition \ref{prop2.2} and Caushy-Schwarz  inequality it follows that
\begin{align*}
& \left\| y(\xi)-y_*(\xi) \right\|  \leq \kappa_0 \cdot \left\|y(\xi)-y_*(\xi)\right\| +\lambda \int_{E} \big[\gamma_1(\xi,s)+ \kappa_2\left\|w(s)\right\|\big]\cdot \left\| y(s)-y_*(s) \right\| ds \nonumber \\ & \qquad +\lambda \int_{E} \left[\kappa_2 \left\|y_*(s)\right\| +\left\|K_2(\xi,s,0)\right\| \right] \left\| w(s)-w_{j_*}(s)\right\| ds \nonumber \\
& \quad \leq \kappa_0 \cdot \left\|y(\xi)-y_*(\xi)\right\| +\lambda \int_{E} \big[\gamma_1(\xi,s)+ \kappa_2\left\|w(s)\right\|\big]\cdot \left\| y(s)-y_*(s) \right\| ds \nonumber \\ & \qquad +\lambda \kappa_2 \beta_* \nu  +\lambda \nu \left( \int_E \left\|K_2(\xi,s,0)\right\|^2 ds  \right)^{\frac{1}{2}} \, ,
\end{align*} and hence
\begin{align}\label{eq20}
\| y(\xi)-y_*(\xi) \| & \leq  \frac{\lambda}{1-\kappa_0} \int_{E} \big[\gamma_1(\xi,s)+ \kappa_2 \|w(s)\|\big]\cdot \| y(s)-y_*(s) \| ds \nonumber \\ & \qquad +  \frac{\lambda \kappa_2 \beta_*\nu}{1-\kappa_0} +\frac{\lambda \nu}{1-\kappa_0} \left( \int_E \|K_2(\xi,s,0) \|^2 ds \right)^{\frac{1}{2}}
\end{align} for a.a. $\xi \in E$.

Denote
\begin{equation}\label{eq21}
\psi_2(\xi,s)= \frac{\lambda}{1-\kappa_0}  \big[\gamma_1(\xi,s)+ \kappa_2\left\|w(s)\right\| \big], \ \  \mbox{a.a.} \ (\xi,s)\in E\times E,
\end{equation}
\begin{equation}\label{eq22}
h_2(\xi)= \frac{\lambda \kappa_2 \beta_*\nu}{1-\kappa_0} +\frac{\lambda \nu}{1-\kappa_0} \left( \int_E \left\|K_2(\xi,s,0)\right\|^2 ds \right)^{\frac{1}{2}}, \ \ \mbox{a.a.} \ \xi \in E.
\end{equation}

Since $w(\cdot)\in V_{\rho}^{\alpha_*(\varepsilon)} \subset V_{\rho}$, then (\ref{eq21}) and Proposition \ref{prop2.41} imply
\begin{equation}\label{eq23}
\displaystyle \left\|\psi_2(\cdot,\cdot)\right\|_2  \leq
\frac{\sqrt{2}\lambda}{1-\kappa_0}  \left[\kappa_1^2+\rho^2 \kappa_2^2 \mu(E)\right]^{\frac{1}{2}} <\frac{1}{\sqrt{2}}.
\end{equation}

Now, from (\ref{om*}), (\ref{eq22}) and  Proposition \ref{prop2.42} it follows that
\begin{equation}\label{eq24}
\left\|h_2(\cdot)\right\|_2^2  \leq  2 \left( \frac{\lambda \kappa_2 \beta_*\left[\mu(E)\right]^{\frac{1}{2}}}{1-\kappa_0}\right)^2 \nu^2 +2\left(\frac{\lambda \omega_*}{1-\kappa_0}\right)^2 \nu^2 \, .
\end{equation}

From  (\ref{g1}), (\ref{eq20}), (\ref{eq21}), (\ref{eq22}), (\ref{eq23}), (\ref{eq24}) and Proposition \ref{prop2.4} we have
\begin{equation}\label{eq26}
\left\|y(\cdot)-y_*(\cdot)\right\|_2  \leq  \left[ \frac{\displaystyle 4 \lambda^2 \kappa_2^2 \beta_*^2 \mu(E)+4\lambda^2\omega_*^2}{(1-\kappa_0)^2- 4\lambda^2 \left[\kappa_1^2+\rho^2 \kappa_2^2 \mu(E)\right] } \right]^{\frac{1}{2}} \nu =g_1\nu \, .
\end{equation}

Since $y(\cdot) \in \mathbb{Z}_{\rho}^{\alpha_*(\varepsilon)}$ is an arbitrarily chosen trajectory and $y_*(\cdot) \in \mathbb{Z}_{\rho}^{\alpha_*(\varepsilon),Lip},$ then  (\ref{eq26}) yields that
\begin{equation}\label{eq27}
\mathbb{Z}_{\rho}^{\alpha_*(\varepsilon)} \subset \mathbb{Z}_{\rho}^{\alpha_*(\varepsilon),Lip} +g_1 \nu \cdot \mathcal{B}_2(1)
\end{equation} where $\mathcal{B}_2(1)$ is defined by (\ref{be2}), $g_1$ is defined by (\ref{g1}).

From inclusions $\mathbb{Z}_{\rho}^{\alpha_*(\varepsilon),Lip} \subset \mathbb{Z}_{\rho}^{\alpha_*(\varepsilon)}$ and (\ref{eq27}) it follows that
\begin{equation*}
h_2\left( \mathbb{Z}_{\rho}^{\alpha_*(\varepsilon)}, \mathbb{Z}_{\rho}^{\alpha_*(\varepsilon),Lip} \right) \leq g_1\nu \, .
\end{equation*} Since $\nu >0$ is an arbitrarily chosen number, the last inequality implies the validity of the equality (\ref{eq16}).

Now, from (\ref{eq14}) and (\ref{eq16}) we conclude that the inequality
\begin{equation}\label{eq27a}
h_2\left( \mathbb{Z}_{\rho}, \mathbb{Z}_{\rho}^{\alpha_*(\varepsilon),Lip} \right) \leq c_*\varepsilon
\end{equation} holds, where $c_*$ and $\alpha_*(\varepsilon)$ are defined by (\ref{ce*}) and (\ref{alfa*}) respectively.

Define new set of control functions. For given $\alpha_*(\varepsilon)>0$ and integer $R>0$ let us denote
\begin{equation}\label{eq28}
V_{\rho}^{\alpha_*(\varepsilon), Lip, R} = \big\{ u(\cdot)\in V_{\rho}^{\alpha_*(\varepsilon), Lip} : \ \mbox{the Lipschitz constant of} \  u(\cdot) \ \mbox{is not greater than} \ R \big\} \, ,
\end{equation} and let $\mathbb{Z}_{\rho}^{\alpha_*(\varepsilon),Lip,R}$ be the set of trajectories of the system (\ref{ue1}) generated by all control functions $u(\cdot)\in V_{\rho}^{\alpha_*(\varepsilon),Lip,R}$ where the set $V_{\rho}^{\alpha_*(\varepsilon),Lip}$ is defined by (\ref{eq15}).

One can show that $V_{\rho}^{\alpha_*(\varepsilon), Lip, R}\subset C(E;\mathbb{R}^m)$ and $\mathbb{Z}_{\rho}^{\alpha_*(\varepsilon), Lip, R} \subset L_2(E;\mathbb{R}^n)$
are compact sets, where $C(E;\mathbb{R}^m)$ is the space of continuous functions $u(\cdot):E\rightarrow \mathbb{R}^m$ with norm $\left\|u(\cdot)\right\|_C= \max \left\{ \left\|u(s)\right\|:s\in E \right\}$.

It is not difficult to prove that  $V_{\rho}^{\alpha_*(\varepsilon), Lip} = \displaystyle \bigcup_{R=1}^{+\infty}V_{\rho}^{\alpha_*(\varepsilon), Lip, R}$, and hence
\begin{equation}\label{eq29}
\mathbb{Z}_{\rho}^{\alpha_*(\varepsilon),Lip} = \bigcup_{R=1}^{+\infty} \mathbb{Z}_{\rho}^{\alpha_*(\varepsilon), Lip,R}
\end{equation} where $V_{\rho}^{\alpha_*(\varepsilon), Lip, R}$ is defined by (\ref{eq28}). (\ref{eq27a}) and (\ref{eq29}) imply that the inequality
\begin{equation*}
h_2\left( \mathbb{Z}_{\rho}, \bigcup_{R=1}^{+\infty} \mathbb{Z}_{\rho}^{\alpha_*(\varepsilon),Lip,R} \right) \leq c_*\varepsilon
\end{equation*} holds, where $c_*$ and $\alpha_*(\varepsilon)$ are defined by (\ref{ce*}) and (\ref{alfa*}) respectively. According to the Theorem \ref{teo2.2}, the set of trajectories $\mathbb{Z}_{\rho}$ of the system (\ref{ue1}) is a compact subset of the space $L_2(E;\mathbb{R}^n)$. Since
\begin{equation*}
\mathbb{Z}_{\rho}^{\alpha_*(\varepsilon),Lip,R} \subset \mathbb{Z}_{\rho}^{\alpha_*(\varepsilon),Lip,R+1}\subset \mathbb{Z}_{\rho}
\end{equation*}
for every $R=1,2,\ldots$, then we have that there exists $R_*(\varepsilon)>0$ such that for every $R\geq R_*(\varepsilon)$ the inequality
\begin{equation*}
h_2\left( \mathbb{Z}_{\rho}, \mathbb{Z}_{\rho}^{\alpha_*(\varepsilon),Lip,R} \right) \leq \left(c_*+\frac{1}{4}\right)\varepsilon
\end{equation*} is satisfied. In particular, the last inequality yields that the inequality
\begin{equation}\label{eq30}
h_2\left( \mathbb{Z}_{\rho}, \mathbb{Z}_{\rho}^{\alpha_*(\varepsilon),Lip,R_*(\varepsilon)} \right) \leq \left(c_*+\frac{1}{4}\right)\varepsilon
\end{equation} is held.

We introduce new set of control functions which consists of piecewise-constant functions satisfying the integral and geometric constraints. For given $\Delta>0$ and a finite $\Delta$-partition $\Gamma=\left\{E_1,E_2,\ldots, E_N\right\}$ of the compact set $E$ we denote
\begin{equation}\label{eq31}
V_{\rho}^{\alpha_*(\varepsilon), \Gamma}= \big\{ u(\cdot)\in V_{\rho}^{\alpha_*(\varepsilon)} : u(s)=u_i \ \mbox{for every} \ s \in E_i, \ i=1,2,\ldots, N \big\}.
\end{equation} By symbol $\mathbb{Z}_{\rho}^{\alpha_*(\varepsilon),\Gamma}$ we denote the set of trajectories of the system (\ref{ue1}) generated by all control functions $u(\cdot)\in V_{\rho}^{\alpha_*(\varepsilon),\Gamma}$, and let
\begin{equation}\label{g2}
g_2=\left[\displaystyle \frac{4\lambda^2 \left[\kappa_2^2 \beta_*^2 + \omega_*^2\right]\mu(E)}{\displaystyle (1-\kappa_0)^2-4\lambda^2 \left[\kappa_1^2+\rho^2 \kappa_2^2 \mu(E)\right]^2}\right]^{\frac{1}{2}}.
\end{equation}

It will be proved that the inclusion
\begin{equation}\label{eq32}
\mathbb{Z}_{\rho}^{\alpha_*(\varepsilon),Lip, R_*(\varepsilon)} \subset \mathbb{Z}_{\rho}^{\alpha_*(\varepsilon),\Gamma} +g_2 R_*(\varepsilon) \Delta \cdot \mathcal{B}_2(1)
\end{equation} is held where $\mathcal{B}_2(1)$ is defined by (\ref{be2}), $g_2$ is defined by (\ref{g2}), $ R_*(\varepsilon)$ is defined in (\ref{eq30}).

Choose an arbitrary $\tilde{x}(\cdot)\in \mathbb{Z}_{\rho}^{\alpha_*(\varepsilon),Lip, R_*(\varepsilon)}$ generated by the control function $\tilde{u}(\cdot)\in V_{\rho}^{\alpha_*(\varepsilon),Lip, R_*(\varepsilon)}.$ According to (\ref{eq28}) we have
\begin{eqnarray}\label{eq33}
\left\{
\begin{array}{llll}
\left\| \tilde{u}(\cdot)\right\|_2 \leq \rho, \ \ \left\| \tilde{u}(s)\right\| \leq \alpha_*(\varepsilon) \ \mbox{for every} \  s\in E, \\
\left\| \tilde{u}(s_1)-\tilde{u}(s_2) \right\| \leq R_*(\varepsilon)\left\| s_1-s_2 \right\| \ \mbox{for every} \ s_1 \in E, \ s_2 \in E.
\end{array}
\right.
\end{eqnarray}

Define new control function $\tilde{u}_*(\cdot):E\rightarrow \mathbb{R}^m$, setting
\begin{equation}\label{eq34}
\tilde{u}_*(s)=\frac{1}{\mu(E_i)} \int_{E_i} \tilde{u}(\tau) d\tau, \ \ s\in E_i, \ i=1,2,\ldots, N.
\end{equation}
It is obvious that the function $\tilde{u}_*(\cdot):E\rightarrow \mathbb{R}^m$ is constant on the set $E_i,$ $i=1,2,\ldots, N,$ and $\left\| \tilde{u}_*(s)\right\| \leq \alpha_*(\varepsilon)$ for every $s\in E.$ From (\ref{eq34}) and Chauchy-Schwarz inequality it follows that
\begin{equation*}\label{eq35}
\int_{E_i}\left\|\tilde{u}_*(s)\right\|^2 ds \leq \int_{E_i} \left\|\tilde{u}(\tau)\right\|^2 d\tau
\end{equation*} which implies that $\left\|\tilde{u}_*(\cdot)\right\|_2 \leq \left\|\tilde{u}(\cdot)\right\|_2  \leq \rho.$
So, we have that $\tilde{u}_*(\cdot)\in V_{\rho}^{\alpha_*(\varepsilon),\Gamma}.$ Let $\tilde{x}_*(\cdot)$ be the trajectory of the system (\ref{ue1}) generated by the control function $\tilde{u}_*(\cdot).$ Then $\tilde{x}_*(\cdot)\in \mathbb{Z}_{\rho}^{\alpha_* (\varepsilon), \Gamma}.$

Let us choose an arbitrary $s\in E$ and fix it. By virtue of Definition \ref{def2.1} there exists $i_*=1,2,\ldots, N$ such that $s\in E_{i_*}.$ From (\ref{eq33}), (\ref{eq34}) and inequality $diam \, (E_{i_*}) \leq \Delta$ it follows that
\begin{equation*}
\left\| \tilde{u}(s)-\tilde{u}_*(s)\right\| \leq \frac{1}{\mu(E_{i_*})} \int_{E_{i_*}}\left\| \tilde{u}(s)- \tilde{u}(\tau)\right\| d\tau  \leq R_*(\varepsilon)\Delta.
\end{equation*}

Since $s\in E$ is arbitrarily fixed, then we have that
\begin{equation}\label{eqlip}
\left\| \tilde{u}(s)-\tilde{u}_*(s)\right\| \leq  R_*(\varepsilon)\Delta
\end{equation} for every $s\in E.$ Now, from Conditions  2.A, 2.B, 2.C, 2.D, Cauchy-Schwarz inequality, Proposition \ref{prop2.2} and (\ref{eqlip})  we obtain
\begin{align}\label{eq36}
& \left\| \tilde{x}(\xi)-\tilde{x}_*(\xi) \right\| \leq  \frac{\lambda}{1-\kappa_0} \int_{E} \big[\gamma_1(\xi,s)+ \kappa_2\left\|\tilde{u}(s)\right\|\big]\cdot \left\| \tilde{x}(s)-\tilde{x}_*(s) \right\| ds \nonumber \\ & \qquad + \frac{\lambda \kappa_2 \beta_* \left[\mu(E)\right]^{\frac{1}{2}} R_*(\varepsilon)\Delta}{1-\kappa_0} +\frac{\lambda \left[\mu(E)\right]^{\frac{1}{2}} R_*(\varepsilon)\Delta}{1-\kappa_0} \left( \int_E \left\|K_2(\xi,s,0)\right\|^2 ds \right)^{\frac{1}{2}}
\end{align} for a.a. $\xi \in E.$ Since $\tilde{u}(\cdot)\in V_{\rho}^{\alpha_*(\varepsilon),Lip, R_*(\varepsilon)} \subset V_{\rho},$ then (\ref{g2}), (\ref{eq36}), Propositions \ref{prop2.4}, \ref{prop2.41} and \ref{prop2.42} imply that
\begin{equation}\label{eq41}
\left\| \tilde{x}(\cdot)-\tilde{x}_*(\cdot) \right\|_2  \leq \left[\displaystyle \frac{4\lambda^2 \left[\kappa_2^2 \beta_*^2 + \omega_*^2\right]\mu(E)}{\displaystyle (1-\kappa_0)^2-4\lambda^2 \left[\kappa_1^2+\rho^2 \kappa_2^2 \mu(E)\right]}\right]^{\frac{1}{2}} \cdot R_*(\varepsilon)\Delta =g_2 R_*(\varepsilon)\Delta.
\end{equation}

Since $\tilde{x}(\cdot)\in \mathbb{Z}_{\rho}^{\alpha_*(\varepsilon),Lip, R_*(\varepsilon)}$ is arbitrarily chosen trajectory, $\tilde{x}_*(\cdot)\in \mathbb{Z}_{\rho}^{\alpha_*(\varepsilon),\Gamma},$ then (\ref{eq41}) yields the proof of the inclusion (\ref{eq32}).

Denote
\begin{equation}\label{eq42}
\Delta_*(\varepsilon)=  \frac{\varepsilon}{4g_2R_*(\varepsilon)} .
\end{equation}

Then (\ref{eq32}) and (\ref{eq42}) imply that for every finite $\Delta$-partition $\Gamma= \{ E_1,E_2,\ldots, E_N \}$  of the compact set $E$ such that $\Delta \leq \Delta_*(\varepsilon),$ the inclusion
\begin{equation}\label{eq43}
\mathbb{Z}_{\rho}^{\alpha_*(\varepsilon),Lip, R_*(\varepsilon)} \subset \mathbb{Z}_{\rho}^{\alpha_*(\varepsilon),\Gamma} + \frac{\varepsilon}{4} \mathcal{B}_2(1)
\end{equation} is satisfied where $\mathcal{B}_2(1)$ is defined by (\ref{be2}). On behalf of the inequality (\ref{eq30}) and inclusion (\ref{eq43}) we have that for every finite $\Delta$-partition $\Gamma=\left\{E_1,E_2,\ldots, E_N\right\}$  of the compact set $E$ such that $\Delta \leq \Delta_*(\varepsilon),$ the inclusion
\begin{equation}\label{eq44}
\mathbb{Z}_{\rho} \subset \mathbb{Z}_{\rho}^{\alpha_*(\varepsilon),\Gamma} + \left(c_*+\frac{1}{2}\right)\varepsilon \cdot \mathcal{B}_2(1)
\end{equation} is satisfied. Since $\mathbb{Z}_{\rho}^{\alpha_*(\varepsilon),\Gamma} \subset \mathbb{Z}_{\rho}$, then from (\ref{eq44}) we conclude that for every finite $\Delta$-partition $\Gamma=\left\{E_1,E_2,\ldots, E_N\right\}$  of the compact set $E$ such that $\Delta \leq \Delta_*(\varepsilon),$ the inequality
\begin{equation}\label{eq45}
h_2 \left(\mathbb{Z}_{\rho}, \mathbb{Z}_{\rho}^{\alpha_*(\varepsilon),\Gamma}\right) \leq \left(c_*+\frac{1}{2}\right)\varepsilon
\end{equation} is verified.

For given $\alpha_*(\varepsilon),$ finite $\Delta$-partition $\Gamma=\left\{E_1,E_2,\ldots, E_N\right\}$ of the compact set $E$ and uniform partition $\Lambda=\left\{0=r_0,r_1,\ldots, r_q=\alpha_*(\varepsilon)\right\}$ of the closed interval $[0,\alpha_*(\varepsilon)]$ where $\delta =r_{j+1}-r_j,$ $j=0,1,\ldots, q-1,$ is the diameter of the uniform partition $\Lambda,$ we define new set of control functions, setting
\begin{equation}\label{eq46}
V_{\rho}^{\alpha_*(\varepsilon), \Gamma, \Lambda}= \big\{ u(\cdot)\in V_{\rho}^{\alpha_*(\varepsilon),\Gamma} : u(s)=u_i \ \mbox{for every} \ s \in E_i \  \mbox{and} \ \left\|u_i\right\| \in \Lambda,  \ i=1,2,\ldots, N  \big\}.
\end{equation} By symbol $\mathbb{Z}_{\rho}^{\alpha_*(\varepsilon),\Gamma,\Lambda}$ we denote the set of trajectories of the system (\ref{ue1}) generated by all control functions $u(\cdot)\in V_{\rho}^{\alpha_*(\varepsilon),\Gamma,\Lambda}.$

Let us prove that for every finite $\Delta$-partition $\Gamma=\left\{E_1,E_2,\ldots, E_N\right\}$ of the compact set $E$ and uniform partition $\Lambda=\left\{0=r_0,r_1,\ldots, r_q=\alpha_*(\varepsilon)\right\}$ of the closed interval $[0,\alpha_*(\varepsilon)],$ the inequality
\begin{equation}\label{eq47}
h_2 \left(\mathbb{Z}_{\rho}^{\alpha_*(\varepsilon),\Gamma}, \mathbb{Z}_{\rho}^{\alpha_*(\varepsilon),\Gamma,\Lambda}\right) \leq g_2 \delta
\end{equation} is held where $g_2$ is defined by (\ref{g2}), $\delta =r_{j+1}-r_j,$ $j=0,1,\ldots, q-1,$ is the diameter of the uniform partition $\Lambda$.

Let us choose an arbitrary trajectory $\tilde{y}(\cdot)\in \mathbb{Z}_{\rho}^{\alpha_*(\varepsilon),\Gamma}$ generated by the control function $\tilde{w}(\cdot)\in V_{\rho}^{\alpha_*(\varepsilon),\Gamma}.$ On behalf of (\ref{eq31}) we have
\begin{eqnarray}\label{eq48}
\left\{
\begin{array}{lll}
\left\| \tilde{w}(\cdot)\right\|_2 \leq \rho, \ \ \left\|\tilde{w}(s)\right\| \leq \alpha_*(\varepsilon), \ \mbox{for every} \ s\in E,  \\
\tilde{w}(s) = \tilde{w}_i  \ \mbox{for every} \ s \in E_i, \ i=1,2,\ldots, N.
\end{array}
\right.
\end{eqnarray}

From (\ref{eq48}) it follows that if $\tilde{w}_i <\alpha_*(\varepsilon),$ then there exists $j_i=0,1,\ldots, q-1$ such that
\begin{equation}\label{eq49}
\| \tilde{w}_i \| \in \left[r_{j_i}, r_{j_i +1}\right) \, .
\end{equation}

Define new control function $\tilde{w}_*(\cdot):E\rightarrow \mathbb{R}^m$, setting
\begin{eqnarray}\label{eq50}
\tilde{w}_*(s)=\left\{
\begin{array}{lll}
\displaystyle \frac{\tilde{w}_i}{\left\| \tilde{w}_i\right\|} r_{j_i}  & \mbox{if}  & 0< \left\|\tilde{w}_i\right\| < \alpha_*(\varepsilon), \\
\displaystyle \ \ \tilde{w}_i  & \mbox{if} & \left\|\tilde{w}_i\right\|=0 \ \mbox{or} \  \left\|\tilde{w}_i\right\|=\alpha_*(\varepsilon)
\end{array}
\right.
\end{eqnarray} where $s \in E_i,$ $i=1,2,\ldots, N,$ $r_{j_i}$ is defined in (\ref{eq49}). It is not difficult to verify that $\tilde{w}_*(\cdot)\in V_{\rho}^{\alpha_*(\varepsilon),\Gamma,\Lambda}.$  (\ref{eq48}), (\ref{eq49}), (\ref{eq50}) and the equality $r_{j_i+1}-r_{j_i}=\delta$ imply that
\begin{equation}\label{eq51}
\left\|\tilde{w}(s)- \tilde{w}_*(s)\right\| \leq \delta
\end{equation} for every $s \in E.$ Let $\tilde{y}_*(\cdot): E \rightarrow \mathbb{R}^n$ be the trajectory of the system (\ref{ue1}) generated by the control function $\tilde{w}_*(\cdot)\in V_{\rho}^{\alpha_*(\varepsilon),\Gamma,\Lambda}.$ Then we have that  $\tilde{y}_*(\cdot)\in \mathbb{Z}_{\rho}^{\alpha_*(\varepsilon),\Gamma,\Lambda}.$ From Conditions 2.A, 2.B, 2.C, 2.D, Proposition \ref{prop2.2}, (\ref{eq51}) and Cauchy-Schwarz inequality it follows that
\begin{align}\label{eq52}
\left\| \tilde{y}(\xi)-\tilde{y}_*(\xi) \right\| & \leq \frac{\lambda}{1-\kappa_0} \int_{E} \big[\gamma_1(\xi,s)+ \kappa_2\left\|\tilde{w}(s)\right\|\big]\cdot \left\| \tilde{y}(s)-\tilde{y}_*(s) \right\| ds \nonumber \\ & \quad + \frac{\lambda \kappa_2 \beta_* \left[\mu(E)\right]^{\frac{1}{2}} \delta}{1-\kappa_0} +\frac{\lambda \left[\mu(E)\right]^{\frac{1}{2}} \delta}{1-\kappa_0} \left( \int_E \left\|K_2(\xi,s,0)\right\|^2 ds \right)^{\frac{1}{2}}
\end{align} for a.a. $\xi \in E.$ Since $\tilde{w}(\cdot)\in V_{\rho}^{\alpha_*(\varepsilon),\Gamma} \subset V_{\rho},$ then by virtue of (\ref{g2}), (\ref{eq52}),  Propositions \ref{prop2.4}, \ref{prop2.41} and \ref{prop2.42} we have
\begin{equation}\label{eq57}
\left\| \tilde{y}(\cdot)-\tilde{y}_*(\cdot) \right\|  \leq  \left[\displaystyle \frac{4\lambda^2 \left[\kappa_2^2 \beta_*^2 + \omega_*^2\right]\mu(E)}{\displaystyle (1-\kappa_0)^2-4\lambda^2 \left[\kappa_1^2+\rho^2 \kappa_2^2 \mu(E)\right]}\right]^{\frac{1}{2}} \cdot \delta =g_2 \delta.
\end{equation}

Since $\tilde{y}(\cdot)\in \mathbb{Z}_{\rho}^{\alpha_*(\varepsilon),\Gamma}$ is arbitrarily chosen trajectory and $\tilde{y}_*(\cdot)\in \mathbb{Z}_{\rho}^{\alpha_*(\varepsilon),\Gamma, \Lambda},$ then from (\ref{eq57}) we obtain the validity of the inclusion
\begin{equation}\label{eq57a}
\mathbb{Z}_{\rho}^{\alpha_*(\varepsilon),\Gamma}\subset  \mathbb{Z}_{\rho}^{\alpha_*(\varepsilon),\Gamma,\Lambda} + g_2 \delta \cdot \mathcal{B}_2(1)
\end{equation} where $g_2$ is defined by \eqref{g2}, $\mathcal{B}_2(1)$ is defined by \eqref{be2}, $\delta$ is the diameter of the uniform partition $\Lambda$. Since $\mathbb{Z}_{\rho}^{\alpha_*(\varepsilon),\Gamma,\Lambda} \subset \mathbb{Z}_{\rho}^{\alpha_*(\varepsilon),\Gamma},$ then (\ref{eq57a}) completes the proof of the valdity of the inequality (\ref{eq47}).

Let us set
\begin{equation}\label{eq58}
\delta_*(\varepsilon)=  \frac{\varepsilon}{4g_2} \, .
\end{equation}

Then (\ref{eq47}) and (\ref{eq58}) imply that for every finite $\Delta$-partition $\Gamma= \{E_1,E_2,\ldots, E_N\}$  of the compact set $E$ and for every uniform partition $\Lambda=\{0=r_0,r_1,\ldots, r_q=\alpha_*(\varepsilon)\}$  of the closed interval $[0,\alpha_*(\varepsilon)]$ such that $\delta \leq \delta_*(\varepsilon),$ the inequality
\begin{equation}\label{eq59}
h_2 \left(\mathbb{Z}_{\rho}^{\alpha_*(\varepsilon),\Gamma}, \mathbb{Z}_{\rho}^{\alpha_*(\varepsilon),\Gamma,\Lambda}\right) \leq \frac{\varepsilon}{4}
\end{equation} is satisfied where $\delta$ is the diameter of the uniform partition $\Lambda$. By virtue of the inequalities (\ref{eq45}) and  (\ref{eq59}) we have that for every finite $\Delta$-partition $\Gamma=\left\{E_1,E_2,\ldots, E_N\right\}$  of the compact set $E$, for every uniform partition $\Lambda=\left\{0=r_0,r_1,\ldots, r_q=\alpha_*(\varepsilon)\right\}$  of the closed interval $[0,\alpha_*(\varepsilon)]$ such that $\Delta \leq \Delta_*(\varepsilon),$  $\delta \leq \delta_*(\varepsilon),$ the inequality
\begin{equation}\label{eq60}
h_2 \left(\mathbb{Z}_{\rho}, \mathbb{Z}_{\rho}^{\alpha_*(\varepsilon),\Gamma}\right) \leq \left(c_*+\frac{3}{4}\right)\varepsilon
\end{equation} is verified where $c_*$ is defined by (\ref{ce*}), $\delta$ is the diameter of the uniform partition $\Lambda$.

Finally, us prove that for every finite $\Delta$-partition $\Gamma=\left\{E_1,E_2,\ldots, E_N\right\}$ of the compact set $E$ and uniform partition $\Lambda=\left\{0=r_0,r_1,\ldots, r_q=\alpha_*(\varepsilon)\right\}$ of the closed interval $[0,\alpha_*(\varepsilon)]$ and $\sigma >0$ the inequality
\begin{equation}\label{eq62}
h_2 \left(\mathbb{Z}_{\rho}^{\alpha_*(\varepsilon),\Gamma,\Lambda}, \mathbb{Z}_{\rho}^{\alpha_*(\varepsilon),\Gamma,\Lambda,\sigma}\right) \leq g_2 \alpha_*(\varepsilon)\sigma
\end{equation} holds where $g_2$ is defined by (\ref{g2}), $\mathbb{Z}_{\rho}^{\alpha_*(\varepsilon),\Gamma,\Lambda}$ and $\mathbb{Z}_{\rho}^{\alpha_*(\varepsilon),\Gamma,\Lambda,\sigma}$
are the sets of trajectories of the system \eqref{ue1} generated by the set of control functions $V_{\rho}^{\alpha_*(\varepsilon),\Gamma,\Lambda}$ and $V_{\rho}^{\alpha_*(\varepsilon),\Gamma,\Lambda,\sigma}$ respectively. The set $V_{\rho}^{\alpha_*(\varepsilon),\Gamma,\Lambda}$ is defined by \eqref{eq46}, $V_{\rho}^{\alpha_*(\varepsilon),\Gamma,\Lambda,\sigma}$ is defined by \eqref{fincon}.

Let us choose an arbitrary trajectory $\tilde{z}(\cdot)\in \mathbb{Z}_{\rho}^{\alpha_*(\varepsilon),\Gamma,\Lambda}$ generated by the control function $\tilde{v}(\cdot)\in V_{\rho}^{\alpha_*(\varepsilon),\Gamma,\Lambda}.$ On behalf of (\ref{eq46}) we have
\begin{eqnarray}\label{eq63}
\left\{
\begin{array}{lll}
\| \tilde{v}(\cdot)\|_2 \leq \rho, \ \| \tilde{v}(s)\| \leq \alpha_*(\varepsilon) \ \mbox{for every} \ s\in E, \\
\| \tilde{v}(s) \| = r_{j_i} \in \Lambda  \ \mbox{for every} \ s \in E_i, \ i=1,2,\ldots, N, \ \
\sum_{i=1}^{N}\mu(E_i)r_{j_i}^2 \leq \rho^2.
\end{array}
\right.
\end{eqnarray}

(\ref{eq63}) implies that for each $i=1,2,\ldots, N$ there exists $g_i\in S=\{x\in \mathbb{R}^m: \left\|x\right\|=1\}$ such that
\begin{equation}\label{eq64}
\tilde{v}(s)  = r_{j_i}\cdot g_i
\end{equation}
for every $s \in E_i.$ Since $g_i\in S$, $S_{\sigma}$ is a finite $\sigma$-net on $S,$ then for each $g_i\in S$ there exists $b_{l_i}\in S_{\sigma}=\{ b_1,b_2,\ldots, b_c\}$ such that $\left\|g_i-b_{l_i}\right\| \leq \sigma.$ Define new control function $\tilde{v}_*(\cdot):E\rightarrow \mathbb{R}^m$, setting
\begin{equation}\label{eq65}
\tilde{v}_*(s)  = r_{j_i}\cdot b_{l_i}
\end{equation}
for every $s \in E_i,$  $i=1,2,\ldots, N.$ (\ref{eq63}), (\ref{eq64}) and (\ref{eq65}) yield that $\tilde{v}_*(\cdot)\in V_{\rho}^{\alpha_*(\varepsilon),\Gamma,\Lambda,\sigma}$ and
\begin{equation}\label{eq66}
\left\| \tilde{v}(s)- \tilde{v}_*(s)\right\|  \leq   \alpha_*(\varepsilon) \cdot \sigma
\end{equation} for every $s\in E.$ Let $\tilde{z}_*(\cdot):E\rightarrow \mathbb{R}^n$ be the trajectory of the system (\ref{ue1}) generated by the control function $\tilde{v}_*(\cdot).$ Then $\tilde{z}_*(\cdot)\in \mathbb{Z}_{\rho}^{\alpha_*(\varepsilon),\Gamma,\Lambda,\sigma}.$ Conditions 2.A, 2.B, 2.C, 2.D, \eqref{eq66} and Proposition \ref{prop2.2} imply
\begin{align}\label{eq67}
& \left\| \tilde{z}(\xi)-\tilde{z}_*(\xi) \right\| \leq  \frac{\lambda}{1-\kappa_0} \int_{E} \big[\gamma_1(\xi,s)+ \kappa_2\left\|\tilde{v}(s)\right\|\big]\cdot \left\| \tilde{z}(s)-\tilde{z}_*(s) \right\| ds \nonumber \\ & \qquad + \frac{\lambda \kappa_2 \beta_* \left[\mu(E)\right]^{\frac{1}{2}} \alpha_*(\varepsilon)\sigma}{1-\kappa_0} +\frac{\lambda \left[\mu(E)\right]^{\frac{1}{2}} \alpha_*(\varepsilon)\sigma}{1-\kappa_0} \left( \int_E \left\|K_2(\xi,s,0)\right\|^2 ds \right)^{\frac{1}{2}}
\end{align} for a.a. $\xi \in E.$ Since $\tilde{v}(\cdot)\in V_{\rho}^{\alpha_*(\varepsilon),\Gamma,\Lambda} \subset V_{\rho},$ then from (\ref{g2}), (\ref{eq67}), Propositions \ref{prop2.4}, \ref{prop2.41} and \ref{prop2.42} we obtain that
\begin{equation}\label{eq72}
\left\| \tilde{z}(\cdot)-\tilde{z}_*(\cdot) \right\|_2  \leq  \left[\displaystyle \frac{4\lambda^2 \left[\kappa_2^2 \beta_*^2 + \omega_*^2\right]\mu(E)}{\displaystyle (1-\kappa_0)^2-4\lambda^2 \left[\kappa_1^2+\rho^2 \kappa_2^2 \mu(E)\right]}\right]^{\frac{1}{2}} \cdot \alpha_*(\varepsilon) \sigma =g_2 \alpha_*(\varepsilon) \sigma.
\end{equation}

Since $\tilde{z}(\cdot)\in \mathbb{Z}_{\rho}^{\alpha_*(\varepsilon),\Gamma,\Lambda}$ is arbitrarily chosen trajectory, $\tilde{z}_*(\cdot)\in \mathbb{Z}_{\rho}^{\alpha_*(\varepsilon),\Gamma, \Lambda,\sigma},$ then from (\ref{eq72}) we obtain the validity of the inclusion
\begin{equation}\label{eq73}
\mathbb{Z}_{\rho}^{\alpha_*(\varepsilon),\Gamma, \Lambda}\subset  \mathbb{Z}_{\rho}^{\alpha_*(\varepsilon),\Gamma,\Lambda,\sigma} + g_2\alpha_*(\varepsilon) \sigma \mathcal{B}_2(1)
\end{equation} where $\mathcal{B}_2(1)$ is defined by (\ref{be2}). Since $\mathbb{Z}_{\rho}^{\alpha_*(\varepsilon),\Gamma,\Lambda,\sigma} \subset \mathbb{Z}_{\rho}^{\alpha_*(\varepsilon),\Gamma,\Lambda},$ then (\ref{eq73}) completes the proof of the validity of the inequality (\ref{eq62}).

Let us set
\begin{equation}\label{eq74}
\sigma_*(\varepsilon,\alpha_*(\varepsilon))=  \frac{\varepsilon}{4g_2 \alpha_*(\varepsilon)}
\end{equation} where $g_2$ is defined by (\ref{g2}). (\ref{eq62}) and (\ref{eq74}) imply that for every finite $\Delta$-partition $\Gamma=\left\{E_1,E_2,\ldots, E_N\right\}$  of the compact set $E$, for every uniform partition $\Lambda=\left\{0=r_0,r_1,\ldots, r_q=\alpha_*(\varepsilon)\right\}$  of the closed interval $[0,\alpha_*(\varepsilon)]$ and for every finite $\sigma$-net $S_{\sigma}$ such that $\sigma \leq \sigma_*(\varepsilon,\alpha_*(\varepsilon)),$ the inequality
\begin{equation}\label{eq75}
h_2 \left(\mathbb{Z}_{\rho}^{\alpha_*(\varepsilon),\Gamma,\Lambda}, \mathbb{Z}_{\rho}^{\alpha_*(\varepsilon),\Gamma,\Lambda,\sigma}\right) \leq \frac{\varepsilon}{4}
\end{equation} is satisfied.

By virtue of the inequalities (\ref{eq60}) and  (\ref{eq75}) we conclude that for every finite $\Delta$-partition $\Gamma=\left\{E_1,E_2,\ldots, E_N\right\}$  of the compact set $E$, for every uniform partition $\Lambda=\left\{0=r_0,r_1,\ldots, r_q=\alpha_*(\varepsilon)\right\}$  of the closed interval $[0,\alpha_*(\varepsilon)]$ and for every finite $\sigma$-net $S_{\sigma}$ such that $\Delta \leq \Delta_*(\varepsilon),$  $\delta \leq \delta_*(\varepsilon),$ $\sigma \leq \sigma_*(\varepsilon,\alpha_*(\varepsilon)),$ the inequality \eqref{os1} is verified where  $\alpha_*(\varepsilon)>0$, $\Delta_*(\varepsilon)>0,$ $\delta_*(\varepsilon)>0$ and $\sigma_*(\varepsilon,\alpha_*(\varepsilon))>0$ are defined by (\ref{alfa*}), (\ref{eq42}), (\ref{eq58}) and (\ref{eq74}) respectively, $\delta=r_{j+1}-r_j$, $j=0,1,\ldots, q-1$, is the diameter of the uniform partition $\Lambda$.
\end{proof}

\section{Conclusions}

Using an algorithm presented in \cite{gus4} for specifying and aligning of the elements of a finite $\sigma$-net $S_{\sigma}=\left\{b_1,b_2, \ldots, b_c\right\}$ and an algorithm for aligning of the elements of the uniform partition $\Lambda=\{0=r_0,r_1,\ldots, r_q=\alpha_*(\varepsilon)\}$ satisfying the inequality $\sum_{i=0}^{N-1} \mu (E_i) r_{j_i}^2 \leq r^2$ and applying a numerical method for calculation of the trajectory of the system (\ref{ue1}) generated by the appropriate piecewise-constant control function, it is possible to construct the set of trajectories of the system (\ref{ue1}). Note that the number $\left(\sum_{i=0}^{N-1} \mu (E_i) r_{j_i}^2\right)^{\frac{1}{2}}$ characterizes the consumed total control resource when the piecewise-constant control function $u(s)=r_{j_i} b_{l_i}$, $s \in E_i$, $r_{j_i} \in \Lambda$, $b_{l_i}\in S_{\sigma}$, $i=1,2,\ldots, N$, is chosen as a control effort.

From inclusion $\mathbb{Z}_{\rho}^{\alpha_*(\varepsilon),\Gamma,\Lambda,\sigma} \subset \mathbb{Z}_{\rho}$ it follows that the presented approximation is an internal approximation. One of the advantages of the presented approximation method is that every function from approximating set is also the trajectory of the system, and therefore, the method can be used for specifying of the trajectories with prescribed properties.

\end{document}